\theoremstyle{plain}
\newtheorem{thm}{Theorem}[section]
\newtheorem*{thm*}{Theorem}
\newtheorem*{cor*}{Corollary}
\newtheorem*{defn*}{Definition}
\newtheorem{prop}[thm]{Proposition}
\newtheorem{lem}[thm]{Lemma}
\newtheorem{cor}[thm]{Corollary}
\newtheorem{claim}[thm]{Claim}
\newtheorem*{claim*}{Claim}
\newtheorem*{ac}{Acknowledgments}
\theoremstyle{definition}
\newtheorem{ex}[thm]{Example}
\newtheorem{rem}[thm]{Remark}
\theoremstyle{remark}
\numberwithin{equation}{thm}
\def\rank{\mathrm{rank}}
\def\a{\mathfrak a}
\def\b{\mathfrak b}
\def\e{\mathrm{e}}
\def\m{\mathfrak m}
\def\p{\mathfrak p}
\def\P{\mathfrak P}
\newcommand{\rma}{\mathrm{a}}
\newcommand{\rme}{\mathrm{e}}
\newcommand{\rmr}{\mathrm{r}}
\newcommand{\rmR}{\mathrm{R}}
\newcommand{\rmS}{\mathrm{S}}
\newcommand{\calR}{\mathcal{R}}
\def\depth{\mathrm{depth}}
\def\Supp{\mathrm{Supp}}
\def\Ass{\mathrm{Ass}}
\def\Min{\mathrm{Min}}
\def\height{\mathrm{ht}}
\def\Spec{\mathrm{Spec}}
\begin{document}

\setlength{\baselineskip}{17pt}
%%%%%%%%%%%%%%%%%%%%%%%%%%%%%%%%%%%%%%%%%%%%%%%%%%%%%%%%%%%%%
\title{Normal Sally modules of rank one}
\author{Tran Thi Phuong}
\address{Faculty of Mathematics ans Statistics, ton Duc Thang University, Ho CHi Minh City, Vietnam}
\email{tranthiphuong@tdt.edu.vn}

\thanks{2010 {\em Mathematics Subject Classification.}13A30, 13B22, 13B24, 13B30, 13D40, 13E05, 13H10.}
\thanks{{\em Key words and phrases.} Hilbert functions, Hilbert coefficients, associated graded rings, Rees algebras, Sally modules, normal filtrations, Serre condition.}

\thanks{The author is partially supported by JSPS KAKENHI 26400054.}

\begin{abstract}In this paper, we explore the structure of the normal Sally modules of rank one with respect to an $\m$-primary ideal in a Nagata reduced local ring $R$ which is not necessary Cohen-Macaulay. As an application of this result, when the base ring is Cohen-Macaulay analytically unramified, the extremal bound on the first normal Hilbert coefficient leads to the depth of the associated graded rings $\overline{\mathcal G}$ with respect to a normal filtration is at least $\dim R-1$ and $\overline{\mathcal G}$ turns in to Cohen-Macaulay when the third normal Hilbert coefficient is vanished.
\end{abstract}

\dedicatory{Dedicated to Professor Shiro Goto on the occasion
of his $70^{th}$ birthday}

\maketitle
\tableofcontents
\section{Introduction}
Throughout this paper, let $R$ be an analytically unramified Noetherian local ring with the maximal ideal $\m$ and $d=\dim R >0$. Let $I$ be an $\m$-primary ideal of $R$ and suppose that $I$ contains a parameter ideal $Q=(a_1, a_2,...,a_d)$ of $R$ as a reduction. Let $\ell_R(M)$ denote the length of an $R$-module $M$ and $\overline{I^{n+1}}$ denote the integral closure of $I^{n+1}$ for each $n\geq 0$. Since $R$ is an analytically unramified, there are integers $\{\overline{\e_i}(I)\}_{0\leq i \leq d}$ such that the equality 
$$\ell_R(R/\overline{I^{n+1}}) = \overline{ \e_0}(I) {{n+d}\choose{d}} - \overline{ \e_1}(I) {{n+d-1}\choose{d-1}} + ... +(-1)^d\overline{ \e_d}(I)$$
holds true for all integers $n\gg 0$, which we call the normal Hilbert coefficients of $R$ with respect to $I$. We will denote by $\{\e_i(I)\}_{0\leq i \leq d}$ the ordinary Hilbert coefficients of $R$ with respect to $I$. Let 
$$\mathcal R = \rmR(I): = R[It] \text{~and~} T = \rmR(Q):=R[Qt] \subseteq R[t]$$
denote, respectively, the Rees algebra of $I$ and $Q$, where $t$ stands for an indeterminate over $R$. Let 
$$\mathcal R^{'} = \rmR^{'}(I):=R[It, t^{-1}]  {\text{~and~}} \mathcal G= \mathcal G(I):= \mathcal R^{'}/t^{-1}\mathcal R^{'} \cong \oplus_{n\ge 0} I^n/I^{n+1}$$
denote, respectively, the extended Rees algebra of $I$ and the associated graded ring of $R$ with respect to $I$.
Let $\overline{\mathcal R}$ denote the integral closure of $\mathcal R$ in $R[t]$ and $\overline{\mathcal G} = \oplus_{n\ge 0} \overline{I^n}/\overline{I^{n+1}}$ denote the associated graded ring of the normal filtration $\{\overline{I^n}\}_{n \in \mathbb Z}$. Then $\overline{\mathcal R} = \oplus_{n\ge 0} \overline{I^n}t^n$ and $\overline{\mathcal R}$ is a module-finite extension of $\mathcal R$ since $R$ is analytically unramified (see \cite[Corollary 9.2.1]{SH}). For the reduction $Q$ of $I$, the reduction number of $\{\overline{I^n}\}_{n \in \mathbb Z}$ with respect to $Q$ is defined by 
$$\rmr_Q(\{\overline{I^n}\}_{n\in \mathbb Z}) = \min\{r\in \mathbb Z \mid \overline{I^{n+1}} = Q\overline{I^n}, \forall n\ge r\}.$$
%The reduction number of $\{\overline{I^n}\}_{n \in \mathbb Z}$, is defined by
%$$\rmr(\{\overline{I^n}\}_{n\in \mathbb Z}) = \min\{\rmr_Q(\{\overline{I^n}\}_{n\in \mathbb Z}) \text{ for all minimal reduction } Q \text{ of } I\}.$$
 
The notion of Sally modules of normal filtrations was introduced by \cite{CPR2} in order to find the relationship between a bound on the first normal Hilbert coefficients $\overline{\e_1}(I)$ and the depth of $\overline{\mathcal G}$ when $R$ in an analytically unramified Cohen-Macaulay rings $R$. Following \cite{CPR2}, we generalize the definition of normal Sally modules to the non-Cohen-Macaulay cases, and we define the normal Sally modules $\overline{S} = \overline S_Q(I)$ of $I$ with respect to a minimal reduction $Q$ to be the cokernel of the following exact sequence 
$$0\longrightarrow \overline I T \longrightarrow \overline{\mathcal R}_+(1) \longrightarrow \overline S \longrightarrow 0 $$
of graded $T$-modules. Since $\overline{\mathcal R}$ is a finitely generated $T$-module, so is $\overline S$ and we get 
$$\overline S = \oplus_{n\geq 1} \overline{I^{n+1}}/Q^n\overline{I}$$
by the following isomorphism 
$$\overline{\mathcal R}_+(1) \overset{t^{-1}}{\stackrel{\sim}{\longrightarrow}} \sum_{n\geq 0}\overline{I^{n+1}}t^n (\supseteq \sum_{n\geq 0}(Q^{n}\overline I)t^n = \overline IT)$$
of graded $T$-modules.

To state the results of this paper, let us consider the following four conditions:\\
$(C_0)$ The sequence $a_1, a_2,...,a_d$ is a $d$-sequence in $R$ in the sense of \cite{H}\\ 
$(C_1)$ The sequence $a_1, a_2,...,a_d$ is a $d^+$-sequence in $R$, that is for all integers $n_1, n_2,...n_d\ge 1$ the sequence $a_1^{n_1}, a_2^{n_2},...,a_d^{n_d}$ forms a $d$-sequence in any order. \\
$(C_2)$ $(a_1,a_2,...,\check{a_i},...,a_d):_R a_i \subseteq I$ for all $1\leq i \leq d$\\
$(C_3)$ $\depth R> 0$ and $\depth R > 1$ if $d\ge 2$.  
 
These conditions $(C_0)$, $(C_1)$, and $(C_2)$ are exactly the same as in \cite{GO}. The conditions $(C_1)$, $(C_2)$, and $(C_3)$ are automatically satisfied if $R$ is Cohen-Macaulay. Conditions $(C_1)$ and $(C_3)$ imply the ring $R$ has the property $(S_2)$. 

The main result of this research is as follows.
\begin{thm}\label{mainth} Let $R$ be a Nagata and reduced local ring with the maximal ideal $\m$ and $d=\dim R >0$. Let $I$ be an $\m$-primary ideal of $R$ and suppose that $I$ contains a parameter ideal $Q$ of $R$ as a reduction. Assume that conditions $(C_1)$, $(C_2)$, and $(C_3)$ are satisfied. Then the followings are equivalent to each other.  
\begin{enumerate}[\rm (1)]
\item $\overline{\e_1}(I) = \e_0(I) + \e_1(Q) - \ell_R(R/\overline{I}) + 1$.
\item $\m\overline{S} = (0)$ and $\rank_B\overline{S} = 1$, where $B = T/\m T$.
\item $\overline S \cong B(-q)$ as graded $T$-modules for some integer $q\ge 1$.
\end{enumerate}
When this is the case 
\begin{enumerate}[\rm (a)]
\item $\overline S $ is a Cohen-Macaulay $T$-module.
\item Put $t = \depth R$. Then 
$$
 \depth \overline{\mathcal G} \ge \left\{
 \begin{array}{ll}
 d-1 & ~\text{if}~~t \ge d-1,\\
 t & ~\text{if}~~t < d-1.
 \end{array}
 \right.
 $$
\item for all $n\ge 0$,
\begin{eqnarray*}
\ell_R(R/\overline{I^{n+1}})&=& \e_0(I) {{n+d}\choose{d}} - \{\e_0(I) + \e_1(Q) - \ell_R(R/\overline{I})\} {{n+d-1}\choose{d-1}} \\
 & & + \sum_{i=2}^{d}(-1)^i\{\e_{i-1}(Q) + \e_i(Q)\}{{n+d-i}\choose{d-i}}
\end{eqnarray*} 
if $n < q$, and 
\begin{eqnarray*}
\ell_R(R/\overline{I^{n+1}})&=& \e_0(I) {{n+d}\choose{d}} - \{\e_0(I) + \e_1(Q) - \ell_R(R/\overline{I}) + 1\} {{n+d-1}\choose{d-1}} \\
 & & + \sum_{i=2}^{d}(-1)^i\{\e_{i-1}(Q) + \e_i(Q) +{{q}\choose{i-1}}\}{{n+d-i}\choose{d-i}}
\end{eqnarray*}  
if $n\ge q$.
Hence $\overline{\e_i}(I) = \e_{i-1}(Q) + \e_i(Q)+{{q}\choose{i-1}}$ for all $2\leq i\leq d$.
\end{enumerate}
\end{thm}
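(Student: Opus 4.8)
The plan is to prove the cycle $(2)\Rightarrow(1)$, $(3)\Rightarrow(2)$, $(1)\Rightarrow(3)$, and then to read off (a)--(c) from the explicit structure $\overline S\cong B(-q)$. Throughout one uses that, because $a_1,\dots,a_d$ is a $d$-sequence (condition $(C_1)$), the ring $B=T/\m T$ is the polynomial ring $k[X_1,\dots,X_d]$ over the residue field $k=R/\m$, with $X_i$ the image of $a_it$; in particular $B$ is a $d$-dimensional regular domain, so $\m T$ is prime. One also needs two structural facts, established in the manner of \cite{CPR2} but now under $(C_1)$--$(C_3)$: first, $\Supp_T\overline S\subseteq V(\m T)$, so that $\overline S$ is killed by a power of $\m$ and $\rank_B\overline S$ is well defined through the generic point of $\Spec B$; second, $\overline S$ is unmixed, i.e.\ $\overline S=0$ or $\Ass_T\overline S=\{\m T\}$. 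Conditions $(C_1)$--$(C_3)$ enter precisely here, supplying enough depth on $R$, $T$ and $\overline{\mathcal R}$ for these facts to persist outside the Cohen--Macaulay case.

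The first pillar is a Hilbert--coefficient formula. Splitting the defining sequence $0\to\overline IT\to\overline{\mathcal R}_+(1)\to\overline S\to0$ into degrees gives $\ell_R(\overline{I^{n+1}}/Q^n\overline I)=\ell_R(\overline S_n)$, hence $\ell_R(R/\overline{I^{n+1}})=\ell_R(R/Q^n\overline I)-\ell_R(\overline S_n)$. Computing the Hilbert polynomial of the $Q$-good filtration $\{Q^n\overline I\}$ via $0\to\overline I/Q^n\overline I\to R/Q^n\overline I\to R/\overline I\to0$ and the Hilbert coefficients of the parameter ideal $Q$ --- which are nonzero exactly because $R$ is not Cohen--Macaulay --- gives a polynomial whose degree-$(d-1)$ term is $-\{\e_0(I)+\e_1(Q)-\ell_R(R/\overline I)\}\binom{n+d-1}{d-1}$. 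Since $\Supp_T\overline S\subseteq V(\m T)$ and $B$ is a domain, the $\binom{n+d-1}{d-1}$-coefficient of $\ell_R(\overline S_n)$ is $\rank_B\overline S$. Comparing with the normal Hilbert polynomial yields
$$\overline{\e_1}(I)=\e_0(I)+\e_1(Q)-\ell_R(R/\overline I)+\rank_B\overline S,$$
which, together with $\m\overline S=0$ forcing $\ell_R(\overline S_n)=\dim_k\overline S_n$, proves $(2)\Rightarrow(1)$; the implication $(3)\Rightarrow(2)$ is immediate since $\m B(-q)=0$ and $\rank_BB(-q)=1$.

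The heart of the argument is $(1)\Rightarrow(3)$. By the displayed formula, $(1)$ forces $\rank_B\overline S=1$, so $\overline S$ has multiplicity one as a $d$-dimensional $T$-module. As $\m T$ is prime and $\Ass_T\overline S=\{\m T\}$, multiplicity one means $\overline S_{\m T}$ has length one over $T_{\m T}$; thus $\m\overline S$ localizes to zero at $\m T$ and so has dimension $<d$. But a nonzero submodule of the unmixed module $\overline S$ cannot be lower-dimensional, whence $\m\overline S=0$: $\overline S$ is a torsion-free graded $B$-module of rank one, and (2) holds. The decisive step is then to show $\overline S$ is \emph{free} over $B$. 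I would do this by proving $\overline S$ is a maximal Cohen--Macaulay $B$-module, producing a system of parameters of $B$ --- built from the images of $a_1t,\dots,a_dt$ --- that is $\overline S$-regular, where the $d^+$-sequence hypothesis $(C_1)$ controls the relevant colon ideals and the extremal value $(1)$ rules out any length defect. Over the regular ring $B$ the Auslander--Buchsbaum formula then gives $\pd_B\overline S=0$, so $\overline S$ is $B$-free; being of rank one it is cyclic, whence $\overline S\cong B(-q)$ with $q\ge1$ the initial degree (here $q\ge1$ since $\overline S_0=0$). This simultaneously proves (a), as $B(-q)$ is a maximal Cohen--Macaulay, hence Cohen--Macaulay, $T$-module of dimension $d$.

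Finally I would deduce (b) and (c). For (c), substitute $\ell_R(\overline S_n)=\dim_kB(-q)_n=\binom{n-q+d-1}{d-1}$ (which vanishes for $n<q$) into $\ell_R(R/\overline{I^{n+1}})=\ell_R(R/Q^n\overline I)-\ell_R(\overline S_n)$ and expand via $\binom{n-q+d-1}{d-1}=\sum_{i=1}^d(-1)^{i-1}\binom{q}{i-1}\binom{n+d-i}{d-i}$; the two displayed cases correspond to $n<q$ and $n\ge q$, and reading off coefficients gives $\overline{\e_i}(I)=\e_{i-1}(Q)+\e_i(Q)+\binom{q}{i-1}$ for $2\le i\le d$, the validity for every $n\ge0$ (not only $n\gg0$) coming from the Cohen--Macaulayness in (a), which forces the normal Hilbert function to agree with its Hilbert polynomial from the start. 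For (b), I would run the standard depth chase through the sequences relating $\overline{\rmG}$, $\overline{\mathcal R}$ and $\overline{\mathcal R}_+(1)$ together with $0\to\overline IT\to\overline{\mathcal R}_+(1)\to\overline S\to0$, feeding in $\depth_{B_+}\overline S=d$ from (a) and the depths of $T$ and $\overline IT$ governed by $t=\depth R$ through $(C_1)$--$(C_3)$; the two regimes $t\ge d-1$ and $t<d-1$ record whether the bottleneck is the intrinsic shift by the single generator of $\overline S$ or the depth of $R$ itself. The main obstacle is the structural upgrade of the third paragraph: passing from the numerical equality $(1)$ to the honest isomorphism $\overline S\cong B(-q)$, i.e.\ proving maximal Cohen--Macaulayness of $\overline S$ in the non-Cohen--Macaulay setting, where conditions $(C_1)$--$(C_3)$ must be used most delicately; the Hilbert-coefficient formula with its genuine $\e_i(Q)$ contributions is the second, technically heavy, ingredient.
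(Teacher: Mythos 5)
Your architecture (the cycle of equivalences, then reading (a)--(c) off $\overline S\cong B(-q)$) matches the paper's, and your handling of $(1)\Leftrightarrow(2)$, $(3)\Rightarrow(2)$, and of parts (b) and (c) is essentially what the paper does via Lemmas \ref{Sallymodule}, \ref{hbcoef} and \ref{0and1}. The problem is the decisive implication $(1)\Rightarrow(3)$, which you yourself call ``the main obstacle'': in your write-up it is announced, not proved. You propose to show that $\overline S$ is a maximal Cohen--Macaulay $B$-module by exhibiting an $\overline S$-regular system of parameters built from the images of $a_1t,\dots,a_dt$, justified only by ``$(C_1)$ controls the relevant colon ideals and the extremal value $(1)$ rules out any length defect.'' No argument is given, and none can be extracted from the data you have at that point: everything you have established about $\overline S$ before this step is that it is a finitely generated graded torsion-free $B$-module of rank one (equivalently, $\Ass_T\overline S=\{\m T\}$ and $\ell_{T_{\m T}}(\overline S_{\m T})=1$). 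That information alone does not imply freeness, nor even $(S_2)$: for $d\ge 2$ the ideal $(X_1,X_2)\subseteq B=k[X_1,\dots,X_d]$, suitably twisted, is a graded torsion-free rank-one $B$-module with $\Ass_T=\{\m T\}$ which is neither reflexive nor Cohen--Macaulay. So the missing ingredient is structural, not numerical: one must use the relation of $\overline S$ to the ring $\overline{\mathcal R}$.

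This is exactly where the paper's proof does its real work, by a route different from (and weaker, hence easier than) the one you sketch. Rather than proving depth $d$ directly, the paper proves only that $\overline S$ satisfies $(S_2)$ as a $B$-module; since $B$ is a UFD and $\rank_B\overline S=1$, this already forces $\overline S$ to be reflexive, hence free, hence cyclic by the graded Nakayama lemma, giving $\overline S\cong B(-q)$, and Cohen--Macaulayness (a) comes out afterwards as a corollary rather than serving as an intermediate step. The $(S_2)$ property is established by contradiction through a depth chase at a hypothetical bad prime $p$ with $\height_Tp\ge 3$, whose two pillars are: (i) $\overline{\mathcal R}$ is an $(S_2)$-ring (Proposition \ref{S2}, together with Claims \ref{for S2 1} and \ref{for S2 2}) --- this is precisely where the Nagata and reduced hypotheses of the theorem are consumed --- and (ii) $T$ is an $(S_3)$-ring under $(C_1)$ and $(C_3)$ by \cite[Theorem 6.2]{T2}; the chase forces $\depth T_p=2$, contradicting $(S_3)$. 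It is telling that your proposal never invokes the Nagata or reduced hypotheses at all: they appear in the statement exactly because of this step, and their absence from your argument is a reliable symptom that its essential content is missing. (A minor further point: in (c), the validity of the formulas for \emph{all} $n\ge 0$ comes from Lemma \ref{hbcoef}(1), which holds under $(C_0)$ and $(C_2)$ by \cite{T1}, not from the Cohen--Macaulayness of $\overline S$.)
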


The relationship between the equality $\overline{\e_1}(I) = \e_0(I) - \ell_R(R/\overline{I}) + 1$ and the depth of $\overline {\mathcal G}$ in an analytically unramified Cohen-Macaulay local ring was examined in \cite{CPR2}. In their paper, they proved that if $R$ is an analytically unramified Cohen-Macaulay ring possessing a canonical module $\omega_{R}$, then $\overline{\e_1}(I) = \e_0(I) - \ell_R(R/\overline{I}) + 1$ makes $\depth \overline{\mathcal G}\ge d-1$ (\cite[Theorem 2.6]{CPR2}). Moreover if $d\ge 3$ and $\overline{\e_3}(I)=0$, then this equality $\overline{\e_1}(I) = \e_0(I) - \ell_R(R/\overline{I}) + 1$ leads to the Cohen-Macaulayness of $\overline{\mathcal G}$ ((\cite[Proposition 3.4]{CPR2})). The assumption $R$ has a canonical module assures that $\overline{\mathcal R}$ satisfies the Serre condition $(S_2)$ as a ring, which is essential for the proofs of their results. In this paper, as an application of Theorem \ref{mainth}, we will prove that the above results \cite[Theorem 2.6 and Proposition 3.4]{CPR2} still hold true even when we delete the assumption that the base ring possessing a canonical module $\omega_{R}$, as stated in the following.

\begin{thm}\label{e2} Let $R$ be a analytically unramified Cohen-Macaulay local ring with the maximal ideal $\m$, dimension $d=\dim R >0$, and $I$ an $\m$-primary ideal of $R$ containing a parameter ideal $Q$ of $R$ as a reduction. Assume that $\overline{\e_1}(I) = \e_0(I) - \ell_R(R/\overline{I}) + 1$. Then the following assertions hold true.
\begin{enumerate}[\rm(1)]
\item $\depth \overline{\mathcal G} \ge d-1$. 
\item If $d\ge 3$ and $\overline{\e_3}(I) = 0$ then $\overline {\mathcal G}$ is Cohen-Macaulay, $\overline{\e_2}(I) = 1$, and the normal filtration has reduction number two. 
\end{enumerate}
\end{thm}

Now it is a position to explain how this paper is organized. This paper contains of 3 sections. The introduction part is this present section. In Section 2 we will collect some auxiliary results on normal Sally modules and normal Hilbert functions. We will prove Theorem \ref{mainth}, Theorem \ref{e2}, and explore a consequence of Theorem \ref{mainth} in the Cohen-Macaulay case in Section 3. 
%%%%%%%%%%%%%%%%%%%%%%%%%%%%%%%%%%%%%%%%%%%%%
%%%%%%%%%%%%%%%%%%%%%%%%%%%%%%%%%%%%%%%%%%%%%

\section{Auxiliaries}
In this section we will collect properties of the normal Sally modules and the normal Hilbert coefficients which are essential for the proof of our main results. Throughout this section, let $R$ be an analytically unramified Noetherian local ring with the maximal ideal $\m$ and $\dim R = d\ge 1$. Let $I$ be an $\m$-primary ideal and assume that $I$ contains a minimal reduction $Q = (a_1,...,a_d)$.

Let us begin with the following lemma which play an important role on computing the normal Hilbert functions and on examining the structure of $\overline S$.
 
\begin{lem} \label{Aux} {\cite[Lemma 2.1]{GO}} Suppose that conditions $(C_0)$ and $(C_2)$ are satisfied. Then 
$$T/\overline I T \cong (R/\overline I)[X_1,X_2,...,X_d]$$ 
as graded $R$-algebras, where $(R/\overline I)[X_1,X_2,...,X_d]$ denotes the plynomial ring with $d$ indeterminates over Artinian local ring $R/\overline I$. Hence $T/\overline I T$ is a Cohen-Macaulay ring with $\dim T/\overline I T = d$.
\end{lem}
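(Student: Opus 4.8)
The plan is to compare $T$ with the polynomial ring through the natural presentation of the Rees algebra of $Q$, then reduce modulo $\overline I$ and show that all defining relations disappear. Concretely, let $\psi\colon R[X_1,\dots,X_d]\to T$ be the surjective graded $R$-algebra homomorphism determined by $X_i\mapsto a_it$; since the degree-$n$ component of $T=\bigoplus_{n\ge0}Q^nt^n$ is $Q^nt^n$, the map $\psi$ is surjective in each degree. Because $\overline IT$ is the ideal generated by $\overline I\subseteq T_0$ and $(R/\overline I)[X_1,\dots,X_d]=R[X_1,\dots,X_d]/\overline IR[X_1,\dots,X_d]$, reducing $\psi$ modulo $\overline I$ produces a surjection of graded $(R/\overline I)$-algebras
$$\overline\psi\colon (R/\overline I)[X_1,\dots,X_d]\longrightarrow T/\overline IT,\qquad X_i\mapsto \overline{a_it}.$$
Hence it suffices to prove the inclusion $\ker\psi\subseteq \overline IR[X_1,\dots,X_d]$, for then $\overline\psi$ is injective, and therefore an isomorphism.

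The key step is to control $\ker\psi$, and this is where $(C_0)$ enters. Since $a_1,\dots,a_d$ is a $d$-sequence, the ideal $Q$ is of linear type (Huneke), so the canonical map $\mathrm{Sym}_R(Q)\to \mathcal R(Q)=T$ is an isomorphism. Consequently $\ker\psi$ is generated by the linear forms coming from the first syzygy module of $(a_1,\dots,a_d)$; that is, $\ker\psi$ is generated by elements $\ell=\sum_{i=1}^d b_iX_i$ with $b_i\in R$ and $\sum_{i=1}^d b_ia_i=0$. I expect this to be the main obstacle: without the $d$-sequence hypothesis the kernel could carry genuinely nonlinear relations, which one would have to eliminate by a direct degree-by-degree argument, whereas the linear-type property reduces the entire problem to relations of degree one.

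It then remains to push the coefficients of each such linear form into $\overline I$, which is exactly what $(C_2)$ supplies. Given a syzygy $\sum_{i=1}^d b_ia_i=0$, fix $i$ and rewrite $b_ia_i=-\sum_{j\ne i}b_ja_j\in (a_1,\dots,\check{a_i},\dots,a_d)$, so that $b_i\in (a_1,\dots,\check{a_i},\dots,a_d):_Ra_i\subseteq I\subseteq \overline I$ by $(C_2)$. Thus every generator $\ell=\sum_i b_iX_i$ of $\ker\psi$ lies in $\overline IR[X_1,\dots,X_d]$, giving $\ker\psi\subseteq \overline IR[X_1,\dots,X_d]$ and hence the desired isomorphism $T/\overline IT\cong (R/\overline I)[X_1,\dots,X_d]$ as graded $R$-algebras.

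For the final assertion, note that $\overline I$ is $\m$-primary, since $I\subseteq\overline I\subseteq\m$ and $I$ is $\m$-primary, so $R/\overline I$ is an Artinian local ring, in particular Cohen-Macaulay of dimension $0$. Because a polynomial extension of a Cohen-Macaulay ring in finitely many variables is again Cohen-Macaulay, with $\dim (R/\overline I)[X_1,\dots,X_d]=\dim(R/\overline I)+d=d$, we conclude that $T/\overline IT$ is Cohen-Macaulay of dimension $d$.
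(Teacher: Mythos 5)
Your proof is correct and is essentially the argument behind the cited result: the paper itself gives no proof of this lemma but refers to \cite[Lemma 2.1]{GO}, whose proof is exactly your route --- $(C_0)$ makes $Q$ an ideal of linear type by Huneke's theorem \cite{H1}, so $\ker\psi$ is generated by linear syzygy forms, and $(C_2)$ forces each syzygy coefficient into $I\subseteq\overline I$, killing the kernel modulo $\overline I$.
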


Under our generalized assumption, the results \cite[Proposition 2.2]{CPR2} on the set of associated prime ideals and the dimension of $\overline S$ do not change, and moreover we obtain a formula on $\depth \overline{\mathcal G}$ as follows.

\begin{lem}\label{Sallymodule} The following assertions hold true.
\begin{enumerate}[\rm (1)]
\item {\cite[Lemma 2.1]{GO}} $\m^{\ell}\overline S = 0$ for integer $\ell \gg 0$. Hence $\dim_{T}\overline S \le d$.
\item {\cite[Lemma 2.3]{GO}} Suppose that conditions $(C_0)$, $(C_2)$ and $(C_3)$ are satisfied and $T$ is a $(S_2)$ ring. Then $\Ass_T(\overline S) \subseteq \{\m T\}$. Hence $\dim_T\overline S = d$ provided $\overline S \ne (0)$.
\item Suppose that conditions $(C_0)$, $(C_2)$ and $(C_3)$ are satisfied. Then $\depth \overline{\mathcal G} \ge \depth R$ if $\overline S = (0)$ and 
$$
 \depth \overline{\mathcal G} \ge \left\{
 \begin{array}{ll}
 \depth R & (\depth_T \overline S \ge \depth R+1),\\
 \depth_T \overline S -1  & (\depth_T \overline S \le \depth R ~\text{or} ~ \depth R = d-1)
 \end{array}
 \right.
 $$
if $\overline S \ne (0)$. 
\end{enumerate}
\end{lem}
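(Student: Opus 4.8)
The assertions (1) and (2) are quoted from \cite{GO}, so the only thing to prove is the depth estimate (3). The plan is to read off $\depth \overline{\rm G}$ by transporting depth along a short chain of graded short exact sequences and applying the depth lemma (equivalently, by chasing the long exact sequences of local cohomology with respect to the graded maximal ideal of $T$). First I would assemble the relevant sequences. The definition of the normal Sally module gives
\[
0 \longrightarrow \overline I T \longrightarrow \overline{\mathcal R}_+(1) \longrightarrow \overline S \longrightarrow 0,
\]
the associated graded ring sits in
\[
0 \longrightarrow \overline{\mathcal R}_+(1) \longrightarrow \overline{\mathcal R} \longrightarrow \overline{\rm G} \longrightarrow 0
\]
(the degree-$n$ component of the left map being the inclusion $\overline{I^{n+1}} \subseteq \overline{I^n}$), and the splitting $\overline{\mathcal R} = R \oplus \overline{\mathcal R}_+$ in degree zero yields
\[
0 \longrightarrow \overline{\mathcal R}_+ \longrightarrow \overline{\mathcal R} \longrightarrow R \longrightarrow 0,
\]
where $\overline{\mathcal R}_+ \cong \overline{\mathcal R}_+(1)$ up to a degree shift, so the two have equal depth. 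Finally, Lemma \ref{Aux} furnishes $0 \to \overline I T \to T \to (R/\overline I)[X_1, \ldots, X_d] \to 0$ whose cokernel is Cohen-Macaulay of dimension $d$; applied to this sequence the depth lemma gives $\depth_T \overline I T \ge \min\{\depth T, d+1\} = \depth T$.

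Writing $r = \depth R$ and $s = \depth_T \overline S$, the heart of the argument is the inequality $\depth_T \overline I T \ge r+1$, which I would obtain by showing $\depth T = \depth \mathcal R(Q) \ge r+1$ from the $d$-sequence hypotheses $(C_0)$ and $(C_3)$. Granting this, the first sequence gives $\depth \overline{\mathcal R}_+(1) \ge \min\{r+1, s\}$, the third gives $\depth \overline{\mathcal R} \ge \min\{\depth \overline{\mathcal R}_+(1), r\}$, and the quotient rule applied to the second gives
\[
\depth \overline{\rm G} \ge \min\{\, \depth \overline{\mathcal R}_+(1) - 1,\ \depth \overline{\mathcal R} \,\}.
\]
Now the case analysis falls out, all inequalities being used in the safe (lower-bound) direction. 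If $\overline S = 0$ (the degenerate value $s = +\infty$) the first sequence collapses to $\overline{\mathcal R}_+(1) \cong \overline I T$, so $\depth \overline{\mathcal R}_+(1) \ge r+1$ and the chain yields $\depth \overline{\rm G} \ge r$. If $\overline S \ne 0$ then $s \le \dim_T \overline S = d$ by part (1); when $s \ge r+1$ each term in the chain is at least $r$ and one gets $\depth \overline{\rm G} \ge r$, whereas when $s \le r$ the module $\overline S$ is the bottleneck, $\depth \overline{\mathcal R}_+(1) \ge s$ and $\depth \overline{\mathcal R} \ge s$, so the displayed inequality delivers $\depth \overline{\rm G} \ge s - 1$.

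I expect the main obstacle to be twofold. First, proving $\depth_T \overline I T \ge \depth R + 1$ without the Cohen-Macaulay hypothesis: this is precisely where $(C_0)$ and $(C_3)$ (the latter forcing $\depth R \ge 1$, and $\ge 2$ when $d \ge 2$) must be used to bound $\depth \mathcal R(Q)$ from below, and it is the analogue of the fact that $\mathcal R(Q)$ is Cohen-Macaulay when $R$ is. Second, the boundary value $\depth R = d-1$ has to be isolated: there $\depth \overline{\mathcal R}$ can equal $d-1$ and becomes the limiting term in the last displayed inequality, so that combining $\depth \overline{\mathcal R} \ge d-1$ with $\depth \overline{\mathcal R}_+(1) - 1 \ge \min\{d,s\} - 1 = s-1$ (using $s \le d$) forces $\depth \overline{\rm G} \ge s-1$ regardless of whether $s \le r$; this is what produces the extra clause ``$\depth R = d-1$'' in the statement. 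Where a bound is required to be sharp rather than a mere estimate, one must in addition track the connecting homomorphisms at the critical cohomological degree.
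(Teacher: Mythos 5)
Your skeleton is exactly the paper's: the published proof of (3) consists precisely of listing the same four graded short exact sequences you use and ``comparing depths,'' and your formal depth-chase through them is correct, including the observation that everything reduces to $\depth_T \overline{I}T \ge \depth R+1$ and the way the clause $\depth R = d-1$ is handled via $\depth_T\overline S \le \dim_T\overline S \le d$ from part (1). The genuine gap is the step you yourself defer: the inequality $\depth T \ge \depth R + 1$ (equivalently, via sequence (1) and Lemma \ref{Aux}, $\depth_T \overline{I}T \ge \depth R + 1$) is announced as something you ``would obtain'' from $(C_0)$ and $(C_3)$, but never proved. This is not a detail that can be postponed, because it is the one input that cannot come from the four sequences at all: assigning depth $1$ to each of the unknown modules $T$, $\overline{I}T$, $\overline{\mathcal R}_+$, $\overline{\mathcal R}$ (each is a submodule of $R[t]$, so has positive depth once $\depth R>0$) and depth $0$ to $\overline{\rm G}$ satisfies every inequality the depth lemma extracts from sequences (1)--(4), while the assertion to be proved is $\depth \overline{\rm G} \ge \depth R \ge 2$. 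So without an external lower bound on $\depth T$ the comparison of depths proves nothing; the entire content of part (3) in the non-Cohen--Macaulay setting is concentrated in that bound.

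Moreover, you should not expect this bound to be a routine analogue of ``$\mathcal R(Q)$ is Cohen--Macaulay when $R$ is.'' What elementary $d$-sequence arguments give is this: under $(C_0)$ and $\depth R = r \ge 1$, the elements $a_1,\dots,a_r$ form a regular sequence, and Huneke's property $(a_1,\dots,a_i)\cap Q^{n} = (a_1,\dots,a_i)Q^{n-1}$ together with the Valabrega--Valla criterion yields $\depth \gr_Q(R) \ge \depth R$; but by the Huckaba--Marley dichotomy (\cite{HM}) this only places $\depth T$ in $\{r, r+1\}$, and excluding the value $r$ requires degree-wise control of the local cohomology of $\gr_Q(R)$ --- which is the substance of the Goto--Yamagishi/Trung theory of (unconditioned strong) $d$-sequences rather than a formal consequence of $(C_0)$. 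The paper itself signals that such ring-theoretic information about $T$ is an outside input: part (2) takes ``$T$ is $(S_2)$'' as a hypothesis, and the Remark invokes \cite[Theorem 6.2]{T2} for $T$ being $(S_3)$ only under the stronger condition $(C_1)$ --- and even $(S_3)$ gives $\depth T \ge 3$, which matches $\depth R + 1$ only when $\depth R = 2$. To complete your proof you must either prove $\depth T \ge \depth R+1$ under $(C_0)$, $(C_2)$, $(C_3)$, or locate and cite the precise statement in \cite{GO} (whose Sally-module arguments this lemma transposes); as written, the central step of your proposal is missing.
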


\begin{proof} The proof of (1) is the same as that of {\cite[Lemma 2.1]{GNO}}. The proof of (2) is almost the same as that of {\cite[Lemma 2.3]{GO}}, let us include a proof for the sake of completeness. We may assume that $\overline S \ne (0)$. Let $P\in \Ass_{T} \overline S$. Then $\m T \subseteq P$. Assume that $P\ne \m T$. Then $\height_{T}P\ge 2$ since $\height_{T} \m T = 1$. Therefore $\depth T \ge 2$ by condition $(S_2)$. Now we consider the following exact sequences
$$0\longrightarrow (\overline{I}T)_P \longrightarrow (\overline{\mathcal R}_+(1))_P\longrightarrow \overline S_P\longrightarrow 0$$
and 
$$0\longrightarrow (\overline{I}T)_P \longrightarrow T_P \longrightarrow T_P/(\overline{I}T)_P \longrightarrow 0$$
of graded $T_P$-modules. Since $\depth_{T_P}(\overline{\mathcal R}_+(1))_P\ge 1$ and $\depth_{T_P}\overline S_P = 0$, $\depth_{T_P}(\overline{I}T)_P = 1$. Therefore $\depth_{T_P}T_P/(\overline{I}T)_P = 0$ by the second exact sequence. Moreover since conditions $(C_0)$ and $(C_2)$ are satisfied, $T/\overline{I}T$ is a Cohen-Macaulay ring by \cite[Proposition 2.2]{GO}, so is $T_P/(\overline{I}T)_P$. Therefore $P\in \Min_{T}T_P/(\overline{I}T)_P = \{\m T\}$, which is a contradiction. Thus $P=\m T$ as desired.   

The statement (3) follows by comparing depths of $T$-modules in the following exact sequences 
$$(1)~~~~~~0\longrightarrow \overline IT \longrightarrow T \longrightarrow T/\overline IT \longrightarrow 0. $$
$$(2)~~~~~~0\longrightarrow \overline I T \longrightarrow \overline{\mathcal R}_+(1) \longrightarrow \overline S \longrightarrow 0 $$
$$(3)~~~~~~0 \longrightarrow \overline{\mathcal R}_+(1) \longrightarrow \overline{\mathcal R} \longrightarrow \overline{\mathcal G} \longrightarrow 0, $$
$$(4)~~~~~~0 \longrightarrow \overline{\mathcal R}_+ \longrightarrow \overline{\mathcal R} \longrightarrow R \longrightarrow 0$$
of graded $T$-modules. 
\end{proof}

Applying Lemma \ref{Sallymodule} to the case where the base ring $R$ is analytically unramified Cohen-Macaulay, we get $\depth \overline{\mathcal G} \ge \depth_T \overline S -1$ (\cite[Proposition 2.4]{CPR2}(b)) and $\overline{\mathcal G}$ is Cohen-Macaulay if $S=(0)$ (\cite[Proposition 2.4]{CPR2}(a)).

\begin{rem}\label{rems3} Suppose that conditions $(C_1)$ and $(C_3)$ are satisfied. Then $T$ is a $(S_3)$-ring by using \cite[Theorem 6.2]{T2}. Therefore if we assume that conditions $(C_1)$, $(C_2)$ and $(C_3)$ are satisfied, then by Lemma \ref{Sallymodule} $\Ass_T(\overline S) \subseteq \{\m T\}$. Hence $\dim_T\overline S = d$ provided $\overline S \ne (0)$.
\end{rem}

The following lemma play a crucial role on computing the normal Hilbert polynomial in Theorem \ref{mainth}. 

\begin{lem} \label{hbcoef} Suppose that conditions $(C_0)$ and $(C_2)$ are satisfied. Then the following assertions hold true.
\begin{enumerate}[\rm (1)]
\item {\cite[Proposition 2.4]{GO}} For every $n\ge 0$ 
\begin{eqnarray*} 
\ell_R(R/\overline{I^{n+1}})&=& \e_0(I) {{n+d}\choose{d}} - \{\e_0(I) + \e_1(Q) -\ell_R(R/\overline I)\}{{n+d-1}\choose{d-1}}\\
& & + \sum_{i=2}^{d}(-1)^i\{\e_{i-1}(Q) + \e_i(Q){{n+d-i}\choose{d-i}}\} - \ell_R(\overline S_n).
\end{eqnarray*}
\item {\cite[Proposition 2.5]{GO}} $\overline{\e_1}(I) = \e_0(I) + \e_1(Q) -\ell_R(R/\overline I) + \ell_{{T}_{\m T}}(\overline S_{\m T}),$
whence $\overline{\e_1}(I) \ge \e_0(I) + \e_1(Q) -\ell_R(R/\overline I)$.
\end{enumerate}
\end{lem}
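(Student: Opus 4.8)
The plan is to establish $(1)$ by computing $\ell_R(R/\overline{I^{n+1}})$ through two short exact sequences that tie it to the Hilbert--Samuel function of $Q$ and to the graded pieces of $\overline S$, and then to deduce $(2)$ by reading off the top two coefficients of the resulting polynomial for $n\gg0$. For the length bookkeeping, note that since $Q^n\overline I\subseteq\overline{I^{n+1}}\subseteq R$, the identification $\overline S_n=\overline{I^{n+1}}/Q^n\overline I$ gives
\[
0\longrightarrow\overline S_n\longrightarrow R/Q^n\overline I\longrightarrow R/\overline{I^{n+1}}\longrightarrow0,
\]
so $\ell_R(R/\overline{I^{n+1}})=\ell_R(R/Q^n\overline I)-\ell_R(\overline S_n)$, while from $Q^n\overline I\subseteq Q^n\subseteq R$ I get
\[
0\longrightarrow Q^n/Q^n\overline I\longrightarrow R/Q^n\overline I\longrightarrow R/Q^n\longrightarrow0 .
\]
The middle term of interest, $Q^n/Q^n\overline I$, is precisely the degree-$n$ component $(T/\overline I T)_n$, and Lemma~\ref{Aux} identifies $T/\overline I T$ with $(R/\overline I)[X_1,\dots,X_d]$; hence $\ell_R(Q^n/Q^n\overline I)=\ell_R(R/\overline I)\binom{n+d-1}{d-1}$ for every $n\ge0$. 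Combining the two sequences yields the exact identity
\[
\ell_R(R/\overline{I^{n+1}})=\ell_R(R/\overline I)\binom{n+d-1}{d-1}+\ell_R(R/Q^n)-\ell_R(\overline S_n),
\]
valid for all $n\ge0$.

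To recast this in the stated form I would replace $\ell_R(R/Q^n)$ by its polynomial expression. Since $Q$ is generated by the $d$-sequence $a_1,\dots,a_d$ (condition $(C_0)$), the Hilbert--Samuel function of $Q$ agrees with its Hilbert polynomial for every $n\ge0$, so $\ell_R(R/Q^n)=\sum_{i=0}^d(-1)^i\e_i(Q)\binom{n-1+d-i}{d-i}$; this polynomiality from $n=0$ is exactly the input that makes $(1)$ hold for all $n\ge0$ and not merely asymptotically. Applying Pascal's identity $\binom{n-1+d-i}{d-i}=\binom{n+d-i}{d-i}-\binom{n+d-(i+1)}{d-(i+1)}$ termwise collapses this telescoping sum into $\e_0(Q)\binom{n+d}{d}-\{\e_0(Q)+\e_1(Q)\}\binom{n+d-1}{d-1}+\sum_{i=2}^d(-1)^i\{\e_{i-1}(Q)+\e_i(Q)\}\binom{n+d-i}{d-i}$; then using $\e_0(I)=\e_0(Q)$ (as $Q$ is a reduction of $I$) and absorbing the term $\ell_R(R/\overline I)\binom{n+d-1}{d-1}$ into the coefficient of $\binom{n+d-1}{d-1}$ lands on precisely the polynomial displayed in $(1)$. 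This step is routine once the telescoping is carried out.

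For $(2)$ I would evaluate $(1)$ for $n\gg0$ and compare with the normal Hilbert polynomial $\sum_{i=0}^d(-1)^i\overline{\e_i}(I)\binom{n+d-i}{d-i}$. By Lemma~\ref{Sallymodule}(1), $\dim_T\overline S\le d$ and $\m^{\ell}\overline S=0$ for $\ell\gg0$, so $\ell_R(\overline S_n)$ is eventually polynomial of degree at most $d-1$; matching the $\binom{n+d}{d}$-coefficients gives $\overline{\e_0}(I)=\e_0(I)$, and matching the $\binom{n+d-1}{d-1}$-coefficients gives $\overline{\e_1}(I)=\e_0(I)+\e_1(Q)-\ell_R(R/\overline I)+c$, where $c$ is the coefficient of $\binom{n+d-1}{d-1}$ in the Hilbert polynomial of $\overline S$. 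The decisive point is to identify $c$ with $\ell_{T_{\m T}}(\overline S_{\m T})$, for which I would invoke the additivity formula for multiplicities: since $\overline S$ is annihilated by a power of $\m$, every prime $P$ with $\dim T/P=d$ in $\Supp_T\overline S$ contains $\m T$, and as $T/\m T\cong(R/\m)[X_1,\dots,X_d]$ is a $d$-dimensional domain of multiplicity $1$, the prime $\m T$ is the unique such contributor; the formula then gives $c=\ell_{T_{\m T}}(\overline S_{\m T})$, whence the inequality follows from $\ell_{T_{\m T}}(\overline S_{\m T})\ge0$. I expect this last identification to be the main obstacle: one must confirm that $\m T$ is the only prime of the right dimension in the support and that the multiplicity normalization matches the $\binom{n+d-1}{d-1}$ convention, so that $c$ equals the localized length on the nose rather than a multiple of it.
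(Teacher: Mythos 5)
Your proposal is correct and follows essentially the same route as the proof the paper defers to in \cite{GO}: the two exact sequences reducing $\ell_R(R/\overline{I^{n+1}})$ to $\ell_R(R/Q^n)+\ell_R(R/\overline I)\binom{n+d-1}{d-1}-\ell_R(\overline S_n)$ via Lemma \ref{Aux}, Trung's polynomiality of $\ell_R(R/Q^{n+1})$ under $(C_0)$ (exactly the input the paper highlights right after the lemma), and the associativity formula identifying the $\binom{n+d-1}{d-1}$-coefficient of $\ell_R(\overline S_n)$ with $\ell_{T_{\m T}}(\overline S_{\m T})$, which is legitimate since $\Supp_T\overline S\subseteq V(\m T)$, $\m T$ is the unique support prime with $\dim T/P=d$, and the Hilbert function of $B$ is exactly $\binom{n+d-1}{d-1}$. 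The only micro-gloss is your use of $\ell_R(R/Q^n)=\sum_{i=0}^d(-1)^i\e_i(Q)\binom{n-1+d-i}{d-i}$ at $n=0$, which amounts to $\e_d(Q)=0$; this does hold in the paper's standing setting (analytically unramified, hence reduced, hence $\depth R>0$, so $a_1$ is a nonzerodivisor by the $d$-sequence property $(0):a_1\subseteq(0):Q\subseteq\H^0_{\m}(R)$), so the claim for every $n\ge 0$ is safe, and for $n\ge 1$ your argument is airtight as written.
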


We omit the proof of the above lemma because they are the same as in \cite{GO}. Here we notice that under the condition $(C_0)$, $\ell_R(R/Q^{n+1}) = \sum_{i=0}^{d}(-1)^i\e_{i}(Q){{n+d-i}\choose{d-i}}$ for every $n\ge0$ by \cite[Theorem 4.1]{T1}. 

The following lemma shows that the equality $\overline{\e_1}(I) = \e_0(I) + \e_1(Q) - \ell_R(R/\overline{I})$ corresponds to the case where either $\overline S$ vanishes or the reduction number of the normal Hilbert filtration is at most one. And the equality $\overline{\e_1}(I) = \e_0(I) + \e_1(Q) - \ell_R(R/\overline{I}) + 1$ corresponds to the normal Sally module of rank one.

\begin{lem}\label{0and1} Assume that conditions $(C_1)$, $(C_2)$, and $(C_3)$ are satisfied. Then the following assertions hold true.
\begin{enumerate}[\rm (1)] 
\item The followings are equivalent to each other
\begin{enumerate}[\rm (a)]
\item $\overline{\e_1}(I) = \e_0(I) + \e_1(Q) - \ell_R(R/\overline{I})$
\item $\overline S = 0$. 
\item $\rmr_Q(\{\overline{I^n}\}_{n\in\mathbb Z}) \le 1$.
\end{enumerate}

When this is the case we get the followings.

\begin{enumerate}[\rm (i)]
\item $\depth \overline{\mathcal G}\ge \depth R$.
\item for all $n\ge 0$ 
\begin{eqnarray*}
\ell_R(R/\overline{I^{n+1}})&=& \e_0(I) {{n+d}\choose{d}} - \{\e_0(I) + \e_1(Q) - \ell_R(R/\overline{I})\} {{n+d-1}\choose{d-1}} \\
 & & + \sum_{i=2}^{d}(-1)^i\{\e_{i-1}(Q) + \e_i(Q)\}{{n+d-i}\choose{d-i}}. 
\end{eqnarray*} 
\end{enumerate}

\item {\cite[Theorem 2.9]{GO}} $\overline{\e_1}(I) = \e_0(I) + \e_1(Q) - \ell_R(R/\overline{I}) +1$ if and only if $\m\overline S = 0$ and $\rank_B\overline{S} = 1$.
\end{enumerate}
\end{lem}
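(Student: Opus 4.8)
The plan is to deduce part (1) and its two numerical corollaries directly from the auxiliary formulas already in hand, reserving the only genuinely structural step for the passage from a statement about the localization $\overline S_{\m T}$ to a statement about $\overline S$ itself. Part (2) is quoted verbatim from \cite[Theorem 2.9]{GO}, so it requires no new argument here. Throughout I use that $(C_1)$ implies $(C_0)$ (take all exponents equal to $1$ and the given order in the definition of a $d^+$-sequence), so every lemma of Section 2 whose hypotheses are $(C_0),(C_2),(C_3)$ is available.

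First I would establish $(a)\Leftrightarrow(b)$. By Lemma \ref{hbcoef}(2),
$$\overline{\e_1}(I) = \e_0(I) + \e_1(Q) - \ell_R(R/\overline I) + \ell_{T_{\m T}}(\overline S_{\m T}),$$
so condition $(a)$ is equivalent to $\ell_{T_{\m T}}(\overline S_{\m T}) = 0$, that is, to $\overline S_{\m T} = (0)$. Trivially $\overline S = (0)$ forces $\overline S_{\m T} = (0)$. The reverse implication is the one place where the hypotheses do real work: by the Remark following Lemma \ref{Sallymodule}, conditions $(C_1),(C_2),(C_3)$ make $T$ an $(S_2)$-ring with $\Ass_T(\overline S)\subseteq\{\m T\}$. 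Hence if $\overline S\neq(0)$ it has an associated prime, necessarily $\m T$, and then $\overline S_{\m T}\neq(0)$; contrapositively $\overline S_{\m T}=(0)$ yields $\overline S=(0)$. This gives $(a)\Leftrightarrow(b)$.

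Next I would prove $(b)\Leftrightarrow(c)$ by a short filtration computation using $\overline S = \bigoplus_{n\ge 1}\overline{I^{n+1}}/Q^n\overline I$. The vanishing $\overline S=(0)$ is exactly the family of equalities $\overline{I^{n+1}}=Q^n\overline I$ for all $n\ge 1$. If $\rmr_Q(\{\overline{I^n}\}_{n\in\mathbb Z})\le 1$, then $\overline{I^{n+1}}=Q\overline{I^n}$ for every $n\ge 1$, and an immediate induction gives $\overline{I^{n+1}}=Q^n\overline I$, so $\overline S=(0)$. Conversely, if $\overline S=(0)$ then $\overline{I^n}=Q^{n-1}\overline I$ for all $n\ge 1$, whence $\overline{I^{n+1}}=Q^n\overline I=Q\cdot(Q^{n-1}\overline I)=Q\overline{I^n}$, i.e. $\rmr_Q(\{\overline{I^n}\}_{n\in\mathbb Z})\le 1$.

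Finally, the two consequences are read off in the case $\overline S=(0)$: statement (i) is Lemma \ref{Sallymodule}(3) specialized to $\overline S=(0)$, and statement (ii) follows from Lemma \ref{hbcoef}(1) by discarding the correction term $\ell_R(\overline S_n)$, which vanishes for every $n$. I do not expect a serious obstacle, since all the analytic content has been front-loaded into the cited auxiliaries; the only conceptually delicate move is the use of $\Ass_T(\overline S)\subseteq\{\m T\}$ to upgrade the vanishing of the length $\ell_{T_{\m T}}(\overline S_{\m T})$ to the vanishing of the whole module $\overline S$, and one must simply check that the hypotheses $(C_1),(C_2),(C_3)$ indeed supply the $(S_2)$ property of $T$ needed for that containment.
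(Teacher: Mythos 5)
Your proposal is correct and follows essentially the same route as the paper: Lemma \ref{hbcoef}(2) plus $\Ass_T(\overline S)\subseteq\{\m T\}$ (via the $(S_2)$/$(S_3)$ property of $T$ from the Remark) gives $(a)\Leftrightarrow(b)$, the consequences come from Lemma \ref{Sallymodule}(3) and Lemma \ref{hbcoef}(1), and part (2) is quoted from \cite[Theorem 2.9]{GO}. You in fact supply two details the paper leaves implicit --- the induction making $(b)\Leftrightarrow(c)$ explicit (the paper calls it ``clear''), and the correct source of $\Ass_T(\overline S)\subseteq\{\m T\}$, which the paper misattributes to Lemma \ref{Aux} --- so no gaps remain.
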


\begin{proof} The statement (2) is by \cite[Theorem 2.9]{GO}. Now we will prove (1). Since conditions $(C_1)$, $(C_2)$, and $(C_3)$ are satisfied, $\Ass_{T}\overline S \subseteq \{\m T\}$ by Lemma \ref{Aux} and we get $\overline S_{\m T}=(0)$ if and only if $\overline S = (0)$. Moreover by Lemma \ref{hbcoef}(2) $\overline{\e_1}(I) = \e_0(I) + \e_1(Q) -\ell_R(R/\overline I) + \ell_{{T}_{\m T}}(\overline S_{\m T}),$ therefore $\overline{\e_1}(I) = \e_0(I) + \e_1(Q) - \ell_R(R/\overline{I})$ if and only if $\overline S = 0$. The equivalence of $(b)$ and $(c)$ is clear. Now assume that $\overline S = (0)$. The statement $(i)$ follows by Lemma \ref{Sallymodule}(3). The last assertion $(ii)$ follows by Lemma \ref{hbcoef}(1).
\end{proof}

%%%%%%%%%%%%%%%%%%%%%%%%%%%%%%%%%%%%%%%%%%%%%%%%%%%%%%%%%%%%%%%%%%%%
\section{Proof of Theorem \ref{mainth} and Theorem \ref{e2}}

This section is devoted for presenting the proofs of Theorem \ref{mainth} and Theorem \ref{e2}. In order to do this we need the result that the integral closure of $\rmR(\mathcal I)$ in $R[t]$ is a $(S_2)$-ring. Here we notice that a Noetherian ring $R$ is called Nagata if for every $P\in \Spec R$, for any finite extension $L$ of $\mathscr Q(R/P)$, the integral closure of $R/P$ in $L$ is a finite $R/P$-module, where $\mathscr Q(R/P)$ denotes the quotient field of $R/P$ (see \cite[31.A DEFINITIONS]{Mat}). Let $\mathcal I = \{I_n\}_{n\in \mathbb Z}$ be a filtrations of ideals in $R$, that is $I_n$ is an ideal of $R$ for every $n\in \mathbb Z$, $I_0 = R$, $I_n\supseteq I_{n+1}$ for every $n\in \mathbb Z$, and $I_mI_n\subseteq I_{mn}$ for all $m,n\in \mathbb Z$. Then we get the following result which is belong to Shiro Goto. 

\begin{prop}\label{S2} Assume that $R$ be a Noetherian local ring with the maximal ideal $\m$, $d=\dim R >0$ such that $R$ is a reduced, Nagata, and $(S_2)$-ring. Let $\mathcal I = \{I_n\}_{n\in \mathbb Z}$ be a filtrations of ideals in $R$ such that $I_1 \ne R$. Suppose that $\height_R I_1 \ge 2$, and $\rm R(\mathcal I)\subseteq R[t]$ is Noetherian. Then the integral closure of $\rm R(\mathcal I)$ in $R[t]$ is a $(S_2)$-ring.
\end{prop}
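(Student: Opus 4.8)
The plan is to exhibit $B:=\overline{\mathcal R(\mathcal I)}$ (integral closure in $R[t]$) as a module-finite reduced extension of $A:=\mathcal R(\mathcal I)$ and to verify $(S_2)$ prime by prime by comparing $B$ with $R[t]$, which is $(S_2)$ because $R$ is. First I would fix the structure of $B$. As $A$ is a finitely generated algebra over the Nagata ring $R$, it is Nagata; since $\height_R I_1\ge 2$ the ideal $I_1$ contains a nonzerodivisor, whence $R[t]\subseteq \mathrm{Q}(A)$ and $\mathrm{Q}(A)=\mathrm{Q}(R[t])=:K$. Therefore $B$ is contained in the integral closure of $A$ in $K$, which is module-finite over $A$ by the Nagata property; so $B$ is a module-finite, Noetherian, reduced, graded extension of $A$ with $B=\bigoplus_{n\ge 0}\overline{I_n}\,t^n$, $B_0=R$, and $\dim B=d+1$. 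Being reduced, $B$ satisfies $(S_1)$, so it suffices to prove $\depth B_P\ge 2$ for every $P\in\Spec B$ with $\height_B P\ge 2$.

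Next I would set up the comparison. Since $R$ is $(S_2)$, so is $R[t]$, and the inclusion $B\subseteq R[t]$ yields a short exact sequence of graded $B$-modules
\[
0\longrightarrow B\longrightarrow R[t]\longrightarrow C\longrightarrow 0,\qquad C=\bigoplus_{n\ge 1}(R/\overline{I_n})\,t^n.
\]
A support computation shows that $C$ is supported in codimension $\ge 2$ in $R[t]$ (using $\height_R \overline{I_n}\ge 2$), so every prime of $R[t]$ lying over a height $\ge 2$ prime $P$ of $B$ again has height $\ge 2$; hence $R[t]\otimes_B B_P$ has depth $\ge 2$ along $PB_P$. The long exact local cohomology sequence then gives $H^0_{PB_P}(B_P)=0$ and $H^1_{PB_P}(B_P)\cong H^0_{PB_P}(C_P)$, so $\depth B_P\ge 2$ is equivalent to $P\notin \Ass_B C$. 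Thus everything reduces to showing that $\Ass_B C$ contains no prime of height $\ge 2$; as the associated primes of a graded module are graded, I may take $P$ homogeneous.

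The decisive step, and the only place the hypothesis that $B$ is integrally closed in $R[t]$ is used, disposes of the associated primes that contain the irrelevant ideal $B_+$. If $P\supseteq B_+$ and $P=\Ann_B(\overline{ct^n})$ with $c\notin\overline{I_n}$, then $\overline{I_m}t^m\subseteq B_+\subseteq P$ forces $c\,\overline{I_m}\subseteq \overline{I_{m+n}}$ for all $m\ge 1$, i.e. $ct^n\cdot B_+\subseteq B_+$. Since $B_+$ is a finitely generated faithful $B$-module (it contains $\overline{I_m}t^m$ with $\overline{I_m}$ possessing a nonzerodivisor), the determinant trick makes $ct^n$ integral over $B$, and integral closedness then forces $ct^n\in B$, i.e. $c\in\overline{I_n}$ — a contradiction. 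So no associated prime of $C$ contains $B_+$.

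Finally, for a homogeneous $P\in\Ass_B C$ with $B_+\not\subseteq P$, I would argue by Noetherian induction on $\dim R$. If $\mathfrak p:=P\cap R\neq\m$, then $R_{\mathfrak p}$ is again reduced, Nagata and $(S_2)$ with $\dim R_{\mathfrak p}<d$, the filtration localizes, and the inductive hypothesis applied to $\overline{\mathcal R(\mathcal I_{\mathfrak p})}=B_{\mathfrak p}$ gives $\depth B_P\ge 2$; this removes every $P$ with $P\cap R\neq\m$. The surviving primes lie over the closed point and, not containing $B_+$, correspond to points of the fibre $\mathrm{Proj}(B/\m B)$ of the normalized blow-up of $\mathcal I$. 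I expect controlling these to be the main obstacle: it is exactly the point where the height hypothesis $\height_R I_1\ge 2$ (which bounds $\dim R/\overline{I_n}$, hence the dimension of this fibre) must be combined with the normalization $W=\overline{R}$ of $R$, whose conductor has associated primes only in height one because $R$ is $(S_2)$ — by the depth lemma applied to $0\to R\to W\to W/R\to 0$. I would complete the proof by transferring this height-one bound across the Rees grading to the fibre cone, ruling out the remaining candidates in $\Ass_B C$.
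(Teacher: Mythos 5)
Your skeleton and two of your three cases can be made to work, and your determinant-trick disposal of the associated primes containing $B_+$ is correct and genuinely different from the paper (small repairs: for a general filtration the graded pieces of $B$ are ideals $J_n\supseteq I_n$, not necessarily $\overline{I_n}$, which is all you use; the height claim for a prime $Q$ of $R[t]$ lying over a height $\ge 2$ prime $P$ of $B$ should be argued directly — either $I_1R[t]\subseteq Q$, whence $\height Q\ge \height_R I_1\ge 2$, or some $a\in I_1\setminus Q$ can be inverted and then $B_a=R[t]_a$ because $t=(at)/a$, so $\height Q=\height P$; and since $R[t]$ and $C$ are not finitely generated $B$-modules, depth must be handled via grade/local cohomology throughout). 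The localization-plus-induction step for $P\cap R\neq \m$ also goes through.

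The genuine gap is your last case: graded $P\in\Ass_B C$ with $P\cap R=\m$ and $B_+\not\subseteq P$. This is not a residual technicality but the heart of the proposition, and the sketch you give ("transferring the height-one bound across the Rees grading") has no mechanism behind it. The determinant trick fails there: from $P=\Ann_B(\overline{ct^n})$ you only get $ct^n\cdot P\subseteq B$, and without $B_+\subseteq P$ you cannot make $ct^n$ stabilize a faithful finitely generated $B$-module. Nor does the conductor bound $\Ass_R(\overline R/R)\subseteq\{\fkp : \height \fkp\le 1\}$ transfer to $C$: your module $C=R[t]/B$ is not finitely generated over $B$ and does not embed into $(\overline R/R)\otimes_R R[t]$ (its natural embedding is into $\overline R[t]/T$, where $T$ denotes the integral closure of $\mathcal R(\mathcal I)$ in $Q(R[t])$, and that module is again not finite over $B$). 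The paper closes exactly this hole by changing the comparison ring: instead of $R[t]$ it uses $T$, which \emph{is} module-finite over $B$ (by the Nagata hypothesis), is $(S_2)$ (being a finite product of normal domains), and sits inside $\overline R[t]$. A catenarity/dimension argument (Claim 3.2 of the paper) shows every prime of $T$ lying over a height $\ge 2$ prime $P$ of $B$ has height $\ge 2$, whence $\depth_{B_P}T_P\ge 2$; the depth lemma applied to $0\to B_P\to T_P\to (T/B)_P\to 0$ then forces any bad prime $P$ into $\Ass_B(T/B)$, and since $T/B\hookrightarrow \overline R[t]/R[t]\cong(\overline R/R)\otimes_R R[t]$, whose associated primes are the extensions $\fkp R[t]$ with $\fkp\in\Ass_R(\overline R/R)$, one gets $P\cap R\in\Ass_R(\overline R/R)$, hence $\height (P\cap R)\le 1$ by your own conductor argument — contradicting $P\cap R\supseteq I_1$. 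To finish your proof you would need to import this finite birational comparison (or an equivalent device); comparing with $R[t]$ alone cannot rule out the fibre primes over the closed point.
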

\begin{proof}  
We denote $\mathscr Q(-)$ the quotient field of $(-)$. Put $\mathscr R:= \rm R(\mathcal I)$ and $\mathscr F = \mathscr Q(R[t])$. Let $\mathscr S$ and $\mathscr T$ denote the integral closure of $\mathscr R$ in $R[t]$ and $\mathscr F$, respectively. Since $\height_R I_1 \ge 2$, there exists an $R$-regular element $a\in I_1$. Put $f=at$. Then $\mathscr Q(R[f]) = \mathscr Q(R[t])\supseteq \mathscr Q(R)$. Let $\overline R$ be the integral closure of $R$ in $\mathscr Q(R)$. Then $\overline R$ is a finitely generated graded $R$-module. Since $\overline R[t]$ is integral closed in $\mathscr F$, $\mathscr T\subseteq \overline R[t]$. Therefore $\mathscr S\subseteq \mathscr T$ and $\mathscr S=\mathscr T\cap R[t]$. Since $R$ is Nagata and $\mathscr R$ is Noetherian, $\mathscr T$ is a finitely generated graded $\mathscr R$-module and hence $\mathscr S$ is Noetherian. Assume on the contrary that $\mathscr S$ is not a $(S_2)$-ring. Then there is a prime ideal $P$ of $\mathscr S$ such that 
$$\depth \mathscr S_P <\inf\{2, \dim \mathscr S_P\}.$$
If $\dim \mathscr S_P\le 1$, then $\depth \mathscr S_P = 0, \dim \mathscr S_P = 1$, which contradicts to the fact that $\mathscr S$ is reduced. Therefore $\dim \mathscr S_P\ge 2$ and $\depth \mathscr S_P = 1$. Moreover this ideal $P$ is graded. In fact, assume on the contrary that $P$ is not graded. Then $P\ne P^{*}$, where $P^{*}$ denotes the graded ideal generated by all homogeneous elements of $P$. Therefore $\depth \mathscr S_{P^{*}} = 0$. Furthermore since $\mathscr S$ is reduced, $\depth \mathscr S_{P^{*}} = \dim \mathscr S_{P^{*}}$ and from this we get $\dim \mathscr S_P = 1$, which is a contradiction. Now we put $p = P\cap R$. 

\begin{claim}\label{claimI1} $p\supseteq I_1$.
\end{claim}
\begin{proof}[Proof of Claim \ref{claimI1}] Assume on the contrary that $p\not\supseteq I_1$. Then $\mathscr R_p = \mathscr S_p = R_p[t]$. Thanks to the embedding $R_p\hookrightarrow \mathscr S_p$ and the fact that $\depth \mathscr S_P = 1$  we have $\depth R_p\le 1$. Since $R$ is $(S_2)$, $\depth R_p\ge\inf \{ 2,\dim R_p\}$ and we have $R_p$ is Cohen-Macaulay and so is $\mathscr S_p$. Since $\mathscr S_P = (\mathscr S_p)_{P\mathscr S_p}$, $\mathscr S_P$ is Cohen-Macaulay, which is impossible. Thus $p\supseteq I_1$ as wanted. 
\end{proof}

\begin{claim}\label{claims2} For all graded prime ideal $\p$ of $\mathscr S$ such that $\height \p \ge 2$, we have $\height_{\mathscr T} \P\ge 2$  for all prime ideal $\P$ of $\mathscr T$ such that $\P\cap \mathscr S=\p$. 
\end{claim}

\begin{proof}[Proof of Claim \ref{claims2}] Take $\P_0\in \Min \mathscr T$ such that $\P_0\subseteq \P$ and $\dim \mathscr T_{\P} = \dim \mathscr T_{\P}/{\P}_0\mathscr T_{\P}$. 
Since $\P_0 \in \Ass_{\mathscr T}\overline R[t]$, there is $U\in \Ass_{\overline R[t]}\overline R[t]$ such that $U\cap \mathscr T = \P_0$. Put $W=\P_0\cap \overline R$. Then $W\in \Ass_{\overline R}\overline R$ and $U=W\overline R[t]$. Put $p_0 = W\cap R$. Then $p_0\in\Ass R$ and therefore $\height_R p_0=0$ as $R$ is $(S_1)$. Since 
$$\P_0\cap \mathscr R = U\cap \mathscr R = W\overline R[t]\cap \mathscr R = (W\overline R[t]\cap R[t])\cap \mathscr R = p_0R[t]\cap \mathscr R,$$ 
we get $(\P_0\cap \mathscr R)\cap R = p_0$ and $\mathscr R/(\P_0\cap \mathscr R) \cong \mathscr R(\{(I_n+p_0)/p_0\})$. Therefore $\dim \mathscr R/(\P_0\cap \mathscr R) = d+1$, where $d=\dim R\ge 2$. Similarly we also get $\mathscr S/(\P_0\cap \mathscr S) \cong \mathscr R(\{(\overline{I_n}+p_0)/p_0\})$ and hence $\dim \mathscr S/(\P_0\cap \mathscr S) = d+1$. Now let $M$ be the graded maximal of $\mathscr S$. Then $\p\subseteq M$. Since the extension $\mathscr S\subseteq \mathscr T$ is finite, by Going Up theorem, there exists a graded maximal ideal $N$ of $\mathscr T$ such that $\dim \mathscr T_N/\P\mathscr T_N = \dim \mathscr S_M/\p\mathscr S_M =:\alpha$. 
\begin{displaymath}
    \xymatrix{
		\P_0\ar@{-}[r]& \P \ar@{-}[r]^{\alpha}& ^{\exists}N\subseteq \mathscr T\\
		{\P_0\cap \mathscr S} \ar@{-}[u] \ar@{-}[r]& \p\ar@{-}[u]\ar@{-}[r]^{\alpha}&M\subseteq  \mathscr S\ar@{-}[u]\\
		{\P_0\cap \mathscr R} \ar@{-}[u] \ar@{-}[r]& \P\cap \mathscr R \ar@{-}[r]& \mathscr R\ar@{-}[u] }
\end{displaymath}
Since the extension $({\mathscr S/(\P_0\cap \mathscr S)})_M\subseteq ({\mathscr T/\P_0})_N$ is finite and $R$ is universal catenary, $(\mathscr S/(\P_0\cap \mathscr S))_M$ is universal catenary local domain. Therefore $\height N/\P_0 = \height M/(\P_0\cap \mathscr S)$ and hence $\dim \mathscr T/\P_0 = d+1$. Now we assume on the contrary that $\dim \mathscr T_{\P}\le 1$. Then $\alpha \ge d$. On the other hand, since $\height \p \ge 2$, $d+1 = \alpha + \height \p\ge \alpha + 2$. Hence $\alpha \le d-1$ which yields a contradiction. Thus $\dim \mathscr T_{\P}\ge 2$.
\end{proof}
Therefore $\depth_{\mathscr S_P} \mathscr T_P\ge 2$ because of the following fact which we omit the proof.  

\begin{claim}\label{depth} Let $A$ be a Noetherian local ring with the maximal ideal $\a$ and $B$ is a finite extension of $A$ with $(S_2)$ property. If for every maximal ideal $\b$ of $B$, $\dim B_{\b}\ge 2$, then $\depth_A B \ge 2$.   
\end{claim}

Next we consider the exact sequence
$$0\longrightarrow \mathscr S_P \longrightarrow \mathscr T_P\longrightarrow (\mathscr T/\mathscr S)_P\longrightarrow 0$$
of graded $\mathscr S_P$-modules. Here we notice that $\mathscr T/\mathscr S \ne (0)$ since $\depth_{\mathscr S_P} \mathscr T_P\ge 2$ but $\depth{\mathscr S_P} = 1$. Applying the Depth lemma to the above exact sequence we get $\depth_{\mathscr S_P} (\mathscr T/\mathscr S)_P = 0$. %, since $\depth_{\mathscr S_P} \mathscr T_P \ge 2$ and $\depth \mathscr S_P = 1$. 
Therefore $P\in \Ass_{\mathscr S}\mathscr T/\mathscr S$ and then $P\in \Ass_{\mathscr S}\overline{R}[t]/R[t]$ since $\mathscr T\cap R[t]= \mathscr S$.  Hence $P = Q\cap \mathscr S$ for some $Q\in \Ass_{R[t]}\overline{R}[t]/R[t]$. Moreover since $\overline{R}[t]/R[t]\cong (\overline R/R)\otimes _R R[t]$ and $\Ass_{R[t]}(\overline R/R)\otimes _R R[t] = \bigcup_{p\in \Ass_R\overline R/R}\Ass_{R[t]}(R/p)\otimes_R R[t]$, there is $p_1\in \Ass_R\overline R/R$ such that $Q\in \Ass_{R[t]}R[t]/p_1R[t]$. Since $Q = p_1R[t]$, $p_1 = Q\cap R = P\cap R =p$. Therefore $p\in \Ass_R\overline R/R$. Furthermore since $p\supseteq I_1$, $\dim R_p\ge 2$ and hence $\depth R_p\ge 2$ as $R$ is $(S_2)$. On the other hand $\depth_{R_p} (\overline R)_p >0$ as there is an $R$-regular element in $R$ which is also $\overline R$-regular. Now applying Depth lemma to the following exact sequence 
$$0\longrightarrow R_p \longrightarrow (\overline R)_p \longrightarrow (\overline R/R)_p \longrightarrow 0$$
of $R_p$-modules we get a contradiction. Thus $\mathscr S$ is an $(S_2)$-ring.
\end{proof}

%%%%%%%%%%%%%%%%%%%%%%%%%%%%%%
Now it is a position to prove Theorem \ref{mainth}. 

\begin{proof}[Proof of Theorem \ref{mainth}] Since conditions $(C_1)$, $(C_2)$, and $(C_3)$ are satisfied, we get $\overline{\e_1}(I) = \e_0(I) + \e_1(Q) -\ell_R(R/\overline I) + \ell_{{T}_{\m T}}(\overline S_{\m T})$, by Lemma \ref{hbcoef}(2)  and $\Ass_T(\overline S) \subseteq \{\m T\}$, by Lemma \ref{Sallymodule}(2).
 
$(3) \Rightarrow (2)$ This is obvious.
 
$(2) \Leftrightarrow (1)$ This is by Lemma \ref{0and1}(2).

$(1) \Rightarrow (3)$ Assume that $\overline{\e_1}(I) = \e_0(I) + \e_1(Q) -\ell_R(R/\overline I)+1$. Then $\overline S \ne (0)$ by Lemma \ref{0and1} and hence $\Ass_T\overline S = \{\m T\}$. Therefore $\overline S$ is a torsion free $B$-module. If $d=1$, then $B$ is a PID. Hence $\overline S$ is $B$-free because every torsion free modules over a PID are free.
Now we consider the case where $d\ge 2$. We will show that $\overline S$ is a $(S_2)$ module over $B$. When this is the case, since $\rank_B \overline S = 1$ and $B$ is an UFD, $\overline S$ is a reflexive $B$-module and hence a free $B$-module. Therefore $\overline S/B_+\overline S = (R/\m)\overline{\varphi}$ for some homogeneous element $\varphi \in (\overline S)_q$ of degree $q\ge 1$. Hence $\overline S = B\varphi + B_+\overline S$ and $(\overline S)_{B_+} = B_{B_+}\frac{\varphi}{1}$ by the graded Nakayama lemma. So $(\overline S/B\varphi)_{B_+} =0$ and $\overline S/B\varphi =0$. Thus $\overline S \cong B(-q)$ as desired.   

Now we assume on the contrary that $\overline S$ is not a $(S_2)$ module over $B$. Then 
$$\depth_{B_P} \overline{S}_P < \inf\{2, \dim_{B_P} \overline S_P\}$$ for some prime ideal $P\in\Supp_B \overline S$. Therefore $\dim_{B_P} \overline S_P \ge 2$ and $\depth_{B_P} \overline S_P = 1$. Here we notice that $\dim_{B_P} \overline S_P = \dim B_P$. This ideal $P$ is a graded ideal of $B$. In fact, assume on the contrary that $P$ is not graded. Then $1 = \depth_{B_P} \overline S_P = \depth_{B_{P^{*}}} \overline S_{P^{*}} +1$, where the graded ideal $P^{*}$ denotes the ideal generated by all homogeneous elements in $P$. Therefore we have $\depth_{B_{P^{*}}}\overline S_{P^{*}} = 0$ and hence $P^{*}\in \Ass_B(\overline S)$. Thus $\height P^{*}=0$. 
On the other hand, we have $2\le \dim B_P = \dim_{B_{P}} \overline S_P = \dim_{B_{P^{*}}} \overline S_{P^{*}} +1 = \dim B_{P^{*}} +1$. 
This means $\height P^{*}\ge 1$ which is a contradiction. Thus $P$ is graded. Now let $p\in \Spec T$ such that $P=p+\m T$. Then $p$ is also graded as $\m T$ is graded. Moreover $\height_{T} p \ge 3$ because $\height_B P \ge 2$, $\m T \subseteq p$ and $\height_T \m T =1$. We will prove that $\depth_{T_p} (\overline{\mathcal R})_p\ge 2$. In order to prove this, it is enough to show the following.

\begin{claim} \label{for S2 1} For all graded prime ideal $\p$ of $T$ such that $\height \p \ge 3$, we have $\height_{\overline{\mathcal R}}\P\ge 2$ for all prime ideal $\P$ in $\overline{\mathcal R}$ with $\P\cap T =\p$.
\end{claim}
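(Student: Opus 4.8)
The plan is to transport the argument of Claim \ref{claims2} to the module-finite extension $T \subseteq \overline{\mathcal R}$ (recall $T = \mathcal R(Q) \subseteq \mathcal R(I) \subseteq \overline{\mathcal R}$, and $\overline{\mathcal R}$ is module-finite over $T$ because $Q$ is a reduction of $I$ and $\overline{\mathcal R}$ is module-finite over $\mathcal R$). Fix a graded prime $p$ of $T$ with $\height_T p \ge 3$ and a prime $Q$ of $\overline{\mathcal R}$ with $Q \cap T = p$. First I would pick a minimal prime $Q_0 \in \Min \overline{\mathcal R}$ with $Q_0 \subseteq Q$ realizing the height, that is, $\height_{\overline{\mathcal R}} Q = \height(Q/Q_0)$, so that the entire computation is pushed into the domain $\overline{\mathcal R}/Q_0$ and its subring $T/(Q_0\cap T)$.

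The crux is the dimension count $\dim \overline{\mathcal R}/Q_0 = d+1$. To obtain it I would trace $Q_0$ through the inclusion $\overline{\mathcal R} \subseteq \overline R[t]$, where $\overline R$ is the integral closure of $R$ (module-finite by the Nagata hypothesis): the minimal prime $Q_0$ is cut out by a minimal prime of $\overline R[t]$, hence contracts to a minimal prime $\p_0 := Q_0 \cap R$ of $R$, and $\overline{\mathcal R}/Q_0$ is, up to finite extension, the Rees algebra of the filtration induced by $\{\overline{I^n}\}$ on the domain $R/\p_0$. Since $I$ is $\m$-primary and $\dim R > 0$ we have $I \not\subseteq \p_0$, so this is a graded domain of dimension $\dim R/\p_0 + 1$; and because $R$ is equidimensional (under the $(S_2)$ and universally catenary hypotheses) we get $\dim R/\p_0 = d$ and hence $\dim \overline{\mathcal R}/Q_0 = d+1$. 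Passing to the finite extension of domains $T/(Q_0\cap T) \hookrightarrow \overline{\mathcal R}/Q_0$ then forces $\dim T/(Q_0\cap T) = d+1 = \dim T$, so that $\q_0 := Q_0 \cap T$ is a top-dimensional, hence minimal, prime of $T$ lying below $p$.

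With $\q_0 \in \Min T$ in hand I would close as in Claim \ref{claims2}. Because $R$, and therefore $T$ and its domain quotient $T/\q_0$, are universally catenary, the dimension formula applied to the finite, transcendence-degree-zero extension $T/\q_0 \subseteq \overline{\mathcal R}/Q_0$ yields $\height(Q/Q_0) = \height(p/\q_0)$; and since $\q_0$ is a minimal prime of $T$ contained in $p$, one has $\height(p/\q_0) \ge \height_T p \ge 3$. Combining, $\height_{\overline{\mathcal R}} Q = \height(Q/Q_0) = \height(p/\q_0) \ge 3 \ge 2$, as required. (Equivalently, one can avoid the global dimension formula and mimic Claim \ref{claims2} verbatim, passing to the graded maximal ideals above $p$ and $Q$ and matching their relative heights through universal catenariness.) I expect the main obstacle to be exactly this height bookkeeping across $T \subseteq \overline{\mathcal R}$: the ring is neither normal nor a domain, so the naive integral-extension inequality runs the wrong way ($\height(Q/Q_0) \le \height(p/\q_0)$), and the real input is the equidimensionality and universal-catenary package that upgrades it to an equality and simultaneously pins down $\dim \overline{\mathcal R}/Q_0 = d+1$, which is what guarantees that $\q_0$ is minimal in $T$.
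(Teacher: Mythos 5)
Your proposal is correct and takes essentially the same route as the paper's own proof: pick a minimal prime $Q_0\subseteq Q$ of $\overline{\mathcal R}$ realizing $\height_{\overline{\mathcal R}}Q$, identify it as the contraction of a minimal prime of $R$ so that $\dim \overline{\mathcal R}/Q_0=\dim T/(Q_0\cap T)=d+1$ (equidimensionality of $R$) and hence $Q_0\cap T\in \Min T$, and then transfer heights across the finite extension $T/(Q_0\cap T)\hookrightarrow \overline{\mathcal R}/Q_0$ using universal catenariness. The paper packages this last step as a contradiction argument through the graded maximal ideals $\mathcal M\subseteq T$ and $\mathcal N\subseteq \overline{\mathcal R}$ — exactly the alternative you mention parenthetically — whereas you invoke the dimension formula directly, which is the same underlying mechanism.
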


When this Claim \ref{for S2 1} holds true, since $\overline{\mathcal R}$ is a $(S_2)$-ring by Proposition \ref{S2}, we get $\depth_{T_{\p}} (\overline{\mathcal R})_{\p}\ge 2$ by applying Claim \ref{depth}.

\begin{proof}[proof of Claim \ref{for S2 1}]Assume on the contrary that there exists a prime ideal $\P$ of $\overline{\mathcal R}$ such that $\P\cap T = {\p}$ but $\height_{\overline{\mathcal R}}\P \le 1$. 
Take $\P_0 \in \Min{\overline{\mathcal R}}$ such that $\P_0\subseteq \P$ and $\dim {\overline{\mathcal R}}_{\P} = \dim {\overline{\mathcal R}}_{\P}/\P_0{\overline{\mathcal R}}_{\P}$. 
Then $\P_0 = qR[t]\cap \overline{\mathcal R}$, for some $q\in \Min R$, and $\P_0\cap T = qR[t]\cap T$. Therefore $(\P_0\cap T) \cap R = q$ and hence $T/(\P_0\cap T) \cong \mathcal R((q+\P)/q)$. Since $I\not\subseteq q$, $\dim T/(\P_0\cap T) = \dim \mathcal R((q+I)/q) = \dim R/q +1= d+1$ and we get $\P_0\cap T \in \Min T$.
Let $\mathcal M$ be the unique graded maximal ideals of $T$. 
Since the extension $T\subseteq \overline{\mathcal R}$ is finite, there is a graded maximal ideals $\mathcal N$ of $\overline{\mathcal R}$ such that $\dim \overline{\mathcal R}_{\mathcal N}/\P\overline{\mathcal R}_{\mathcal N} = \dim T_{\mathcal M}/ {\p}T_{\mathcal M} =\alpha$. 
\begin{displaymath}
    \xymatrix{
		\P_0\ar@{-}[r]& \P \ar@{-}[r]^{\alpha}& ^{\exists}\mathcal N \subseteq \overline{\mathcal R}\\
		{\P_0\cap T} \ar@{-}[u] \ar@{-}[r]& {\P\cap T={\p}}\ar@{-}[u]\ar@{-}[r]^{\alpha}& \mathcal M \subseteq T\ar@{-}[u] }	
\end{displaymath}
We notice that ${\p}\subseteq \mathcal N$, as ${\p}$ is graded, and $\height \mathcal N/\P_0 = \height \mathcal M/(\P_0\cap T) = d+1$, because $R$ is universal catenary and the extension $T/(\P_0\cap T) \hookrightarrow \overline{\mathcal R}/\P_0$ is finite. Therefore 
$d+1=\height \mathcal N/\P_0 \le 1+\alpha$ and  $d+1=\height \mathcal M/(\P_0\cap T)\ge 3 + \alpha,$ a contradiction.
\end{proof}

\begin{claim} \label{for S2 2} $\depth_{T_p} (\overline{\mathcal R}_+)_p\ge 2$.
\end{claim}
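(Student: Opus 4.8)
The plan is to transfer the lower bound $\depth_{T_p}(\overline{\mathcal{R}})_p \ge 2$, which we have just obtained from Claim~\ref{for S2 1}, Claim~\ref{depth} and Proposition~\ref{S2}, to the irrelevant ideal by comparing $\overline{\mathcal{R}_+}$ and $\overline{\mathcal{R}}$ along the canonical short exact sequence
$$0 \longrightarrow \overline{\mathcal{R}_+} \longrightarrow \overline{\mathcal{R}} \longrightarrow R \longrightarrow 0$$
of graded $T$-modules (this is sequence $(4)$ in the proof of Lemma~\ref{Sallymodule}), in which $R = \overline{\mathcal{R}}/\overline{\mathcal{R}_+}$ is the degree-zero component, so that the $T$-action on $R$ factors through $T \twoheadrightarrow T/T_+ = R$ and $R$ is annihilated by $T_+ = \bigoplus_{n \ge 1} Q^n t^n$. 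First I would localize this sequence at $p$ to get
$$0 \longrightarrow (\overline{\mathcal{R}_+})_p \longrightarrow (\overline{\mathcal{R}})_p \longrightarrow R_p \longrightarrow 0$$
and apply the Depth lemma, which gives
$$\depth_{T_p}(\overline{\mathcal{R}_+})_p \ge \min\{\depth_{T_p}(\overline{\mathcal{R}})_p,\ \depth_{T_p} R_p + 1\}.$$

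Next I would compute $\depth_{T_p} R_p$. Recall that $p$ is graded and contains $\m T$, so its degree-zero part is exactly $\m$. If $p \not\supseteq T_+$ then $R_p = 0$ and the left and middle terms of the localized sequence coincide, so there is nothing to prove; otherwise $R_p$ is the localization of the already local ring $R$ at $\m$, hence $R_p \cong R$, and since this module is annihilated by $T_+$ its depth over $T_p$ is computed over the quotient, giving $\depth_{T_p} R_p = \depth_R R = \depth R$. Because the present case is $d \ge 2$ (the case $d=1$ having been settled via the fact that torsion-free modules over the PID $B$ are free), condition $(C_3)$ yields $\depth R > 1$, so $\depth_{T_p} R_p \ge 2$.

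Feeding both bounds into the inequality above, the Depth lemma gives
$$\depth_{T_p}(\overline{\mathcal{R}_+})_p \ge \min\{2,\ \depth R + 1\} = 2,$$
which is exactly the claim. I do not anticipate a serious obstacle here: once $\depth_{T_p}(\overline{\mathcal{R}})_p \ge 2$ is available this is a short depth chase. The only step demanding a little care is the identification $\depth_{T_p} R_p = \depth R$, namely checking that localizing the $T$-module $\overline{\mathcal{R}}/\overline{\mathcal{R}_+}$ at the graded prime $p \supseteq \m T$ returns $R$ (or $0$) and that the depth over $T_p$ of a $T_+$-annihilated module agrees with the depth over $R$; both are immediate from the grading and from $p \cap R = \m$.
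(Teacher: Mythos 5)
Your proof is correct and follows essentially the same route as the paper: localize the sequence $0 \to \overline{\mathcal R}_+ \to \overline{\mathcal R} \to R \to 0$ at $p$, split into the cases $R_p = 0$ (where the claim reduces to Claim~\ref{for S2 1}) and $R_p \neq 0$ (where $p$ must be the graded maximal ideal, so $\depth_{T_p} R_p = \depth R \geq 1$ by $(C_3)$), and apply the Depth lemma. The only cosmetic difference is that you invoke the stronger bound $\depth R > 1$ (valid since $d \geq 2$ here), whereas the paper only needs $\depth R \geq 1$ to conclude $\min\{2, \depth R + 1\} = 2$.
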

\begin{proof}[proof of Claim \ref{for S2 2}] We consider the following exact sequence of $T_p$-modules.
$$0\longrightarrow (\overline{\mathcal R}_+)_p \longrightarrow (\overline{\mathcal R})_p \longrightarrow (\overline R)_p \longrightarrow 0.$$
If $R_p=(0)$, then $\depth_{T_p} (\overline{\mathcal R}_+)_p = \depth_{T_p} (\overline{\mathcal R})_p\ge 2$ by Claim \ref{for S2 1}. If $R_p\ne (0)$, then $p=\mathcal M$, the unique graded maximal ideal of $T$, because $p$ is a graded ideal. Hence $\depth_{T_{\mathcal M}} R_{\mathcal M} = \depth R \ge 1$ by the condition $(C_3)$. Now by using Claim \ref{for S2 1} and applying Depth lemma to the above exact sequence we get the desired result.
\end{proof}
By Claim \ref{for S2 1}, Claim \ref{for S2 2}, and comparing depths of $T_p$-modules in the following exact sequences 
$$0\longrightarrow (\overline I T)_p \longrightarrow (\overline{\mathcal R}_+(1))_p \longrightarrow (\overline S)_p \longrightarrow 0$$
and 
$$0\longrightarrow (\overline I T)_p \longrightarrow T_p \longrightarrow T_p/(\overline I T)_p \longrightarrow 0$$
of $T_p$-modules, we get $\depth T_p =2$. On the other hand  since $T$ is an $(S_3)$-ring, by Remark \ref{rems3}, and $\height_{T} p\ge 3$, we get $2=\depth T_p \ge \inf\{3, \dim T_p\} = 3$ which is a contradiction. Thus $\overline S$ is a $(S_2)$ module over $B$ as desired.

The statement $(b)$ is by Lemma \ref{Sallymodule}(3).
Lastly we will prove the statement $(c)$. Since $\overline{S}\cong B(-q)$ for some integer number $q\ge 1$, $\ell_R(\overline S_n) = \ell_R(B_{n-q})$ for all $n\ge 0$. 
If $n< q$ then $\ell_R(\overline S_n) = \ell_R(B_{n-q}) = (0)$ and hence 
\begin{eqnarray*}
\ell_R(R/\overline{I^{n+1}})&=& \e_0(I) {{n+d}\choose{d}} - \{\e_0(I) + \e_1(Q) -\ell_R(R/\overline I)\}{{n+d-1}\choose{d-1}}\\
& & + \sum_{i=2}^{d}(-1)^i\{\e_{i-1}(Q) + \e_i(Q)\}{{n+d-i}\choose{d-i}}
\end{eqnarray*}
by Lemma \ref{hbcoef}(1).
If $n\ge q$ then 
$$\ell_R(B_{n-q}) = {{n-q+d-1}\choose{d-1}} = \sum_{i=0}^{q}(-1)^i{{q}\choose{i}}{{n+d-1-i}\choose{d-1-i}}.$$
Therefore
\begin{eqnarray*}
\ell_R(R/\overline{I^{n+1}})&=& \e_0(I) {{n+d}\choose{d}} - \{\e_0(I) + \e_1(Q) -\ell_R(R/\overline I)\}{{n+d-1}\choose{d-1}}\\
& & + \sum_{i=2}^{d}(-1)^i\{\e_{i-1}(Q) + \e_i(Q)\}{{n+d-i}\choose{d-i}} - \sum_{i=0}^{q}(-1)^i{{q}\choose{i}}{{n+d-1-i}\choose{d-1-i}}\\
&=& \e_0(I) {{n+d}\choose{d}} - \{\e_0(I) + \e_1(Q) -\ell_R(R/\overline I)+1\} {{n+d-1}\choose{d-1}}\\
& & + \sum_{i=2}^{d}(-1)^i\{\e_{1}(Q) + \e_2(Q) +{{q}\choose{i-1}}\}{{n+d-i}\choose{d-i}}
\end{eqnarray*}
also by Lemma \ref{hbcoef}(1). Thus the last statement $(c)$ follows. 
\end{proof}
%%%%%%%%%%%%%%%%%%%%%%%%%%%%%%%%%%%%%%%%%%%%%%%%
Now we will prove Theorem \ref{e2} and examine some corollaries of Theorem \ref{mainth} in the Cohen-Macaulay case. 

\begin{proof}[Proof of Theorem {\ref{e2}}] Since $\overline{\a \widehat R} = \overline{\a}\widehat{R}$ for every $\m$-primary ideal $\a$ in $R$, by passing to the $\m$-adic completion $\widehat R$ of $R$, without lost of generality we may assume that $R$ is complete. Therefore $R$ is a Nagata reduced Cohen-Macaulay ring. Since $\overline{\e_1}(I) = \e_0(I) - \ell_R(R/\overline{I}) + 1$, we have $\depth \overline{\mathcal G} \ge d-1$ by Theorem \ref{mainth}(b). Therefore we get the assertion $(1)$. Now we will prove the assertion $(2)$. Since $\depth \overline{\mathcal G} \ge d-1$, by \cite[Proposition 4.6]{HM} the normal Hilbert coefficients have the following forms
$$\overline{\e_i}(I) = \sum_{n\geq i} {{n-1}\choose {i-1}}\ell_R(\overline{I^n}/Q\overline{I^{n-1}})$$
for $1\leq i\leq d$. Now, since $\overline{\e_3}(I) = 0$, we get $\overline{I^{n}} = Q\overline{I^{n-1}}$ for all $n\ge 3$ and therefore $\overline{\mathcal G}$ is Cohen-Macaulay, by \cite[Theorem 4.6(ii)]{H2}, because $\overline{I^2}\cap Q = Q\overline I$ by \cite[THEOREM 1]{I1}. Since $\overline{\e_1}(I) = \sum_{n\ge 1}\ell_R(\overline{I^n}/Q\overline{I^{n-1}})$ and $\overline{\e_2}(I) = \sum_{n\ge 2}(n-1)\ell_R(\overline{I^n}/Q\overline{I^{n-1}})$, we have $\overline{\e_1}(I) = \ell_R(\overline{I}/Q) + \ell_R(\overline{I^2}/Q\overline{I})$ and $\overline{\e_2}(I) = \ell_R(\overline{I^2}/Q\overline{I})$. Therefore $\overline{\e_2}(I) = \ell_R(\overline{I^2}/Q\overline{I}) = 1$. The last statement follows from the results that $\ell_R(\overline{I^2}/Q\overline{I}) = 1$ and $\overline{I^{n}} = Q\overline{I^{n-1}}$ for all $n\ge 3$.
\end{proof}

%The implication $\overline{\e_2}(I) = 1$ implies $\overline{\e_1}(I) = \e_0(I) - \ell_R(R/\overline{I}) + 1$ was proved by \cite[Proposition 4.9]{MMV}. 

When $R$ is a Nagata reduced Cohen-Macaulay ring, with a further assumption that $\overline{\e_3}(I) = 0$, the number $q$ in Theorem \ref{mainth} turns into exactly $1$ and $\overline{\mathcal G}$ and $\overline{\mathcal R}$ are Cohen-Macaulay as follows. 

\begin{cor}\label{cm} Assume that $R$ be a Nagata reduced Cohen-Macaulay local ring with the maximal ideal $\m$ and $d=\dim R \ge 3$. Let $I$ be an $\m$-primary ideal of $R$ and suppose that $I$ contains a parameter ideal $Q$ of $R$ as a reduction. Assume that $\overline{\e_3}(I) = 0$. Then the followings are equivalent to each other.
\begin{enumerate}[\rm (1)]
\item $\overline{\e_1}(I) = \e_0(I) - \ell_R(R/\overline{I}) + 1$.
\item $\m\overline{S} = (0)$ and $\rank_B\overline{S} = 1$, where $B = T/\m T$.
\item $\overline S \cong B(-1)$ as graded $T$-modules.
\item $\overline{\e_2}(I) = 1$ and $\ell_R(\overline{I^2}/Q\overline{I}) = 1$.
\end{enumerate}
When this is the case 
\begin{enumerate}[\rm (a)]
\item $\overline S $ is a Cohen-Macaulay $T$-module.
\item $\overline{\mathcal G}$ is Cohen-Macaulay.
\item $\overline{\mathcal R}$ is Cohen-Macaulay.% if $d\ge 3$ and $\overline{\mathcal R}$ is not Cohen-Macaulay if $d=2$.
\item $\rm r_Q(\{\overline{I^n}\}_{n\in \mathbb Z}) =2$.
\item for all $n\ge 1$ 
\end{enumerate}
\begin{eqnarray*}
\ell_R(R/\overline{I^{n+1}})&=& \e_0(I) {{n+d}\choose{d}} - \overline{\e_1}(I) {{n+d-1}\choose{d-1}} + {{n+d-2}\choose{d-2}}.
\end{eqnarray*} 
\end{cor}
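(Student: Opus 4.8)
The plan is to extract the four equivalences together with the numerical assertions (a), (d), (e) directly from Theorem~\ref{mainth} and Lemma~\ref{e21}, and then to establish the two ring-theoretic assertions (b) and (c) by improving the depth estimate of Theorem~\ref{mainth}(b) via the Valabrega--Valla criterion and an $\mathrm a$-invariant computation. Throughout I use that $R$ is analytically unramified, Nagata, reduced and Cohen--Macaulay; in particular conditions $(C_1),(C_2),(C_3)$ are satisfied, one has $\e_i(Q)=0$ for all $i\ge 1$ (a parameter ideal of a Cohen--Macaulay ring has vanishing higher Hilbert coefficients), and both Theorem~\ref{mainth} and Lemma~\ref{e21} apply.

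First I would observe that, as $\e_1(Q)=0$, condition (1) is verbatim the hypothesis of Theorem~\ref{mainth}, so that theorem gives $(1)\Leftrightarrow(2)$ and $(2)\Leftrightarrow\bigl(\overline S\cong B(-q)\ \text{for some}\ q\ge 1\bigr)$. To pin down $q=1$ I would put $\e_1(Q)=\e_2(Q)=0$ into the formula $\overline{\e_i}(I)=\e_{i-1}(Q)+\e_i(Q)+\binom{q}{i-1}$ of Theorem~\ref{mainth}(c) at $i=2$, getting $\overline{\e_2}(I)=q$; since Lemma~\ref{e21} identifies condition (1) with $\overline{\e_2}(I)=1$, this forces $q=1$ and at once yields $(1)\Leftrightarrow(4)$ and the sharpened statement (3), $\overline S\cong B(-1)$. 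Part (a) is then Theorem~\ref{mainth}(a) and part (d) is Lemma~\ref{e21}. For (e) I would specialize Theorem~\ref{mainth}(c) to $q=1$ with $\e_i(Q)=0$: for $n\ge 1$ only the index $i=2$ contributes to the sum, through $\binom{1}{1}\binom{n+d-2}{d-2}$, which gives the displayed polynomial, and a direct substitution shows the same expression already returns $\ell_R(R/\overline I)$ at $n=0$.

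For (b) the bound $\depth\overline{\rm G}\ge d-1$ furnished by Theorem~\ref{mainth}(b) is insufficient, so I would instead invoke the Valabrega--Valla criterion: $\overline{\rm G}$ is Cohen--Macaulay once $Q\cap\overline{I^{n+1}}=Q\overline{I^n}$ holds for every $n\ge 0$. The case $n=0$ is immediate from $Q\subseteq\overline I$; the case $n=1$ is exactly Itoh's equality $\overline{I^2}\cap Q=Q\overline I$ in \cite[THEOREM 1]{I1}; and for $n\ge 2$ the value $\rmr_Q(\{\overline{I^n}\})=2$ from (d) gives $\overline{I^{n+1}}=Q\overline{I^n}\subseteq Q$, so that the intersection reduces to $Q\overline{I^n}$. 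This proves (b) uniformly in every dimension $d\ge 2$.

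Finally, for (c) I would compute the $\mathrm a$-invariant of the (now Cohen--Macaulay) ring $\overline{\rm G}$: the initial forms of $a_1,\dots,a_d$ constitute a degree-one homogeneous system of parameters which is a regular sequence, so $\mathrm a(\overline{\rm G})$ is the top nonzero degree of $\overline{\rm G}/(a_1^{\ast},\dots,a_d^{\ast})$ minus $d$, and by $\rmr_Q(\{\overline{I^n}\})=2$ this equals $2-d$. The Ikeda--Trung criterion then gives, for $\overline{\rm G}$ Cohen--Macaulay, that $\overline{\mathcal R}$ is Cohen--Macaulay if and only if $\mathrm a(\overline{\rm G})<0$; this is the case exactly when $2-d<0$, i.e. for $d\ge 3$, and fails for $d=2$ where $\mathrm a(\overline{\rm G})=0$. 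I expect this last step to be the main obstacle: it is the only point at which the dichotomy between $d=2$ and $d\ge 3$ arises, and it requires the sharp passage from the depth inequalities---which stall at $d-1$---to exact Cohen--Macaulayness, accomplished through the $\mathrm a$-invariant (equivalently, local cohomology) rather than through the Sally-module exact sequences alone.
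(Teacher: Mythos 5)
Your proposal is correct, and for the equivalences together with assertions (a), (d), (e) it follows the same route as the paper: everything is extracted from Theorem~\ref{mainth} (using $\e_i(Q)=0$ for $i\ge 1$ in the Cohen--Macaulay case, so that $\overline{\e_2}(I)=\binom{q}{1}=q$) combined with Lemma~\ref{e21}, which pins down $q=1$; your check that the formula in (e) also holds at $n=0$ is a worthwhile detail, since the stated formula of Theorem~\ref{mainth}(c) for $n<q$ only matches after that verification. Where you genuinely diverge is in the two ring-theoretic assertions. For (b) the paper argues by cases: $d=2$ is referred to Huneke \cite[Theorem 4.4]{H2}, while for $d\ge 3$ it uses $\depth \overline{\rm G}\ge d-1$ from Theorem~\ref{mainth}(b) to apply the Huckaba--Marley coefficient formulas \cite[Proposition 4.6]{HM}, deduces $\overline{I^n}=Q\overline{I^{n-1}}$ for $n\ge 3$ from $\overline{\e_3}(I)=0$, and then invokes \cite[Theorem 3.12]{CPR1}. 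You instead verify the Valabrega--Valla condition $Q\cap\overline{I^{n+1}}=Q\overline{I^n}$ for all $n\ge 0$ directly, using Itoh's intersection theorem \cite[THEOREM 1]{I1} at $n=1$ and $\rmr_Q(\{\overline{I^n}\}_{n\in\mathbb Z})=2$ from Lemma~\ref{e21} for $n\ge 2$; this is uniform in $d$ and avoids the case split (and it is non-circular, since Lemma~\ref{e21} is proved independently of this corollary). For (c) the paper applies the numerical criterion of \cite[Proposition 4.10]{HM}, whereas you compute $\rma(\overline{\rm G})=2-d$ from the reduction number and apply the Goto--Shimoda/Trung--Ikeda criterion; this is legitimate and in fact the paper itself invokes the filtration version of \cite[Theorem 1.1]{GS} in its closing Remark. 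The only caution is that both of your key tools are classically stated for $I$-adic filtrations, so you should cite their extensions to Hilbert filtrations (available through \cite{HM}, \cite{CPR1}, \cite{Viet}); granting those standard generalizations, your argument buys a cleaner, dimension-uniform proof built on structural criteria, while the paper's stays entirely within Hilbert-coefficient computations already established for normal filtrations in the literature.
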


\begin{proof} The equivalent statements $(1) \Leftrightarrow (2)$ is exactly as in Theorem \ref{mainth}. By \cite[Proposition 4.9]{MMV} we have $\overline{\e_2}(I) = 1$ implies $\overline{\e_1}(I) = \e_0(I) - \ell_R(R/\overline{I}) + 1$ and then we get the implication $(4) \Rightarrow (1)$. The implication $(1) \Rightarrow (4)$ is followed by Theorem \ref{e2} and its proof. Again by Theorem \ref{mainth} we get the implication $(3)\Rightarrow (1)$. Assume that we have condition (1), then by Theorem \ref{mainth}(c) we have $\overline S \cong B(-q)$ and $\overline{\e_2}(I) = q$ for some $q\ge 1$. Moreover since $\overline{\e_2}(I) = 1$ by the implication $(1)\Rightarrow (4)$, we get the implication $(1)\Rightarrow (3)$.

Now we assume that $\overline{\e_1}(I) = \e_0(I) - \ell_R(R/\overline{I}) + 1$ and $\overline{\e_3}(I) = 0$. Then $(a)$ follows by Theorem \ref{mainth} (a). The statements $(e)$ follows by Theorem \ref{mainth} (c) and the fact that $\overline{\e_2}(I)=1$. 
$(b)$ and $(d)$ are by Theorem \ref{e2} (2). 
For a proof of $(c)$, %by \cite[Proposition 4.6]{HM}, we have  $$\overline{\e_i}(I) = \sum_{n\geq i} {{n-1}\choose {i-1}}\ell_R(\overline{I^n}/Q\overline{I^{n-1}})$$ for $1\leq i\leq d$ since $\depth \overline G \ge d-1$. 
we use \cite[Proposition 4.10]{HM} which said that $\overline{\mathcal R}$ is Cohen-Macaulay if and only if $\overline{\e_1}(I) = \sum_{n= 1}^{d-1}\ell_R(\overline{I^n} +Q/Q)$. Since $\overline{I^{n}} = Q\overline{I^{n-1}}$ for all $n\ge 3$, by $(d)$ and $\ell_R(\overline{I^2} +Q/Q) = \ell_R(\overline{I^2}/Q\overline I) = 1$ we get $\overline{\mathcal R}$ is Cohen-Macaulay as desired. 
\end{proof}

%%%%%%%%%%%%%%%%%%%%%%%Examples%%%%%%%%%%%%%%%%%%%%%%

We end this research by giving some remarks of Theorem \ref{e2} on the Cohen-Macaulayness of $\overline{\mathcal G}$ and $\overline{\mathcal R}$ in the case where $d\le 2$. In the case where $d=2$, we do not have any information about the Cohen-Macaulayness of $\overline{\mathcal G}$. However $\overline{\mathcal R}$ may not be Cohen-Macaulay as showing in the following example.
\begin{ex}
Let $S = k[x,y]$ be the polynomial ring over a field $k$ and $A = k[x^2, xy, xy^2]~(\subseteq S)$. We set $R = A_M$ where $M = (x^2,xy,xy^2)A$ and let $\m$ denote the maximal ideal of $R$. We then have the following.
\begin{enumerate}[$(1)$]
\item $R$ is a Gorenstein local integral domain such that $\dim R = 2$, $\rme_0(\m) = 3$, and $\m^3 = Q\m^2$ where $Q = (x^2-xy^2, xy)$, but $\ell_R(\m^2/Q\m) = 1$.\item $R$ is not a normal ring but $\m$ is a normal ideal in $R$.
\item $\ell_R(R/\m^{n+1}) = 3\binom{n+2}{2} - 3\binom{n+1}{1} + 1$ for all $n \ge 0$.
\item $\overline{S}_Q(\m) = \rmS_Q(\m)  \cong B(-1)$ as a graded $T$-module.

\item $\mathcal G(\m)$ is a Cohen-Macaulay ring with $\rma (\mathcal G(\m)) = 0$, so that $\calR(\m)$ is not a Cohen-Macaulay ring.
\end{enumerate}
\end{ex}

\begin{proof}
Since $A \cong k[x,y,z]/(z^4 - xy^2)$ where $k[x,y,z]$ denotes the polynomial ring, we have $\dim R = 2$ and $\rme_0(\m) = 3$. It is direct to check the rest of Assertion (1). The ring $A$ is clearly not normal, whence so is $R$. To check that $\m$ is normal, one needs some computation which we leave to readers. Assertions (3) and (4) now follow from  \cite[Theorem 1.1]{GNO}. As $\mathcal G(\m) \cong k[x,y,z]/(xy^2)$, $\mathcal G(\m)$ is a Cohen-Macaulay ring with $\rma(\mathcal G(\m)) = 0$. Therefore $\calR(\m)$ is not a Cohen-Macaulay ring (see, e.g., \cite{GS}).
\end{proof}

When $d=1$, the following remark gives a note of the Cohen-Macaulayness of $\overline{\mathcal G}$ and $\overline{\calR}$.

\begin{rem} When $(R,\m)$ is a one-dimensional analytically unramified Cohen-Macaulay local ring, with the same notations for as above, $\overline{\mathcal G}$ is a Cohen-Macaulay ring and $\overline{\calR}$ is a Cohen-Macaulay ring if and only if $R$ is a discrete valuation ring. In fact, the fact that $\overline{\mathcal G}$ is a Cohen-Macaulay ring is by \cite[Proposition 3.25]{Mar}. For a proof of the second fact,  if $R$ is a discrete valuation ring, then $\overline I$ is a parameter ideal and hence $\overline{\mathcal R}$ is Cohen-Macaulay. Conversely let $a\in I$ such that $(a)\subseteq I$ as a reduction. Notice that by the module version of \cite[Theorem 1.1]{GS}, $\overline{\mathcal R}$ is Cohen-Macaulay if and only if $\overline{\mathcal G}$ is Cohen-Macaulay and $\a(\overline{\mathcal G})<0$, where $\a(\overline{\mathcal G})$ denotes the a-invariant of $\overline{\mathcal G}$(\cite[DEFINITION(3.1.4)]{GW}). Since 
$$\overline{\mathcal G}/at\overline{\mathcal G} \cong\mathcal G(\{\overline{I^n}+(a)/(a)\}_{n\in \mathbb Z}),$$ 
the associated graded ring of the filtration $\{\overline{I^n}+(a)/(a)\}_{n\in \mathbb Z}$, $\a(\overline{\mathcal G}/at\overline{\mathcal G}) = \a(\overline{\mathcal G}) + 1 \le 0$. Therefore for all $n\ge 1$, $\overline{I^n}\subseteq (a)+\overline{I^{n+1}}$. Moreover since $\overline{I^{\ell}}=a\overline{I^{\ell-1}}$ for $\ell\gg 0$, $\overline{I^n}\subseteq (a)$ for all $n\ge 1$ and hence $\overline{I^n}=a\overline{I^{n-1}}$ for all $n\ge 1$. In particular $\overline I = (a)$, whence $R$ is a discrete valuation ring by \cite[Corollary 2.5]{G}.
\end{rem}

\vspace{0.5cm}
\begin{ac} 
I would like to thank Professor Shiro Goto and Kazuho Ozeki for his valuable discussions. I also would like to thank
Pham Hung Quy for the helpful discussion throughout the Japan-Vietnam joint seminar on commutative algebra at Meiji Universiry in September 2015. 
\end{ac}

\end{document}